\documentclass[12pt]{amsart}
\setcounter{footnote}{2}
\usepackage{latexsym}
\usepackage{amsthm}
\usepackage{amssymb}
\usepackage[utf8]{inputenc}
\usepackage[T1]{fontenc}
\usepackage[all]{xy}
\usepackage{amsfonts}
\usepackage{amsmath}
\usepackage{graphicx}
\usepackage{pstricks}
\usepackage{url}
\usepackage{hyperref}
\usepackage[a4paper]{geometry}
\geometry{hscale=0.75,vscale=0.65,centering}
\author[A. Nibirantiza]{Aboubacar Nibirantiza} 
\address[Aboubacar Nibirantiza]{University of Burundi, Institute of Applied Pedagogy, Department of mathematics, B.P 2523, Bujumbura-Burundi}
\email{aboubacar.nibirantiza[at]ub.edu.bi}
\date{\today} 
\title[The fine $\spo(2|n)$-equivariant quantizations on the super circles $S^{1|n}$]{The fine $\spo(2|n)$-equivariant quantizations on the super circles $S^{1|n}$}

\newtheorem{theorem}{Theorem}[section]

\newtheorem{df}[theorem]{Definition}

\newtheorem{rmk}[theorem]{Remark}
\newtheorem{lem}[theorem]{Lemma}
\newtheorem{prop}[theorem]{Proposition}

\def\d{\delta}


\newcommand{\R}{\mathbb{R}}

\newcommand{\N}{\mathbb{N}}

\newcommand{\D}{\mathcal{D}}



\newcommand{\Id}{\mathrm{Id}}
\newcommand{\spo}{\mathfrak{spo}}
\newcommand{\pgl}{\mathfrak{pgl}}





\begin{document}
\keywords{Contact structure on superspaces, Lie superalgebras, Differential operators, equivariant quantization.}
\maketitle
\begin{abstract}
In this paper, we generalize the known results on the super circles $S^{1|1}$ and $S^{1|2}$. We construct the fine equivariant quantization on the super circle $S^{1|n}$ for $n\geqslant 3$. The equivariant Lie superalgebra is $\spo(2|n)$ which is constituted of the contact projective vector fields on $S^{1|n}$. In order to construct the fine equivariant quantization on $S^{1|n}$, we use the model developed, in the purely even case, by  Charles H. Conley and Valentin Ovsienko in \textit{[Linear Differential Operators on Contact manifolds, www.arxiv:math-Ph/1205.6562v1,24p, 2012]}. We also use the technical of Casimir operators to prove the uniqueness of the fine quantization on $S^{1|n}$.
The technical of Casimir operators used here is the same as the one used by P. Mathonet and F. Radoux  in [\textit{Lett. Math. Phys. 98 (2011),311-331}] to prove the existence of a $\pgl(p+1|q)$-equivariant quantization on $\R^{p|q}$.
\end{abstract}
\subjclass{2010 Mathematics Subject Classification: 58A50; 17B66; 17B10} 
\section{Introduction}
The concept of equivariant quantization over $\mathbb{R}^{n}$ was introduced by P. Lecomte and V. Ovsienko in \cite{LO}. An equivariant quantization is a linear bijection between a space of differential operators and its corresponding space of symbols that commutes with the action of a Lie subalgebra of vector fields over $\mathbb{R}^{n}$ and preserves the principal symbol.

In their seminal work \cite{LO}, P. Lecomte and V. Ovsienko considered spaces of differential operators acting between densities and the Lie algebra of projective vector fields over $\mathbb{R}^{n}$, $\mathfrak{sl}(n+1)$. In this situation, they showed the existence and uniqueness of an equivariant quantization.

The results of \cite{LO} were generalized in many directions and recently several papers dealt with the problem of equivariant quantizations in the
context of supergeometry. For example, the thesis \cite{Mic09} dealt with \emph{conformally equivariant quantizations} over supercotangent bundles, the papers \cite{MatRad11} and \cite{LMR} exposed and solved respectively the problems of the $\mathfrak{pgl}(p+1|q)$-equivariant quantization over $\mathbb{R}^{p|q}$ and of the $\mathfrak{osp}(p+1,q+1|2r)$-equivariant quantization over $\mathbb{R}^{p+q|2r}$, whereas in \cite{LR}, the authors define the problem of the natural and projectively invariant quantization on arbitrary supermanifolds and show the existence of such a map.\\
Another type of equivariant quantization was studied over the super circles $S^{1|1}$ and $S^{1|2}$ endowed with the canonical contact structures in \cite{GarMelOvs07,Mel09,MelNibRad13}. The equivariant Lie superalgebras are $\spo(2|1)$ on $S^{1|1}$ and $\spo(2|2)$ on $S^{1|2}$. These Lie superalgebras provide to the existence of the two geometric structures  on the super circle, i.e, the projective structure and the contact structure. In \cite{GarMelOvs07,Mel09,MelNibRad13} the explicit formulas of the equivariant quantization on the super circles  $S^{1|1}$ and $S^{1|2}$ are given.\\

In this paper, we generalize on the super circles $S^{1|n}$ for an arbitrary $n\geqslant 3$ the results obtained on  $S^{1|1}$ and $S^{1|2}$.
 The equivariant quantizations that we construct here intertwine the actions of the Lie superalgebra of contact projective vector fields $\spo(2|n)$ on $S^{1|n}$. This Lie superalgebra is the intersection of the Lie superalgebra of contact vector fields $\mathcal{K}(n)$ on $S^{1|n}$ and the Lie superalgebra of projective vector fields $\pgl{(2|n)}$.  In this paper, the spaces of differential operators are endowed with filtrations which are defined thanks to the contact structures and which are finer than the classical ones. The spaces of symbols are then the graded spaces corresponding to these finer filtrations. \\
 
 We use here two formalisms: on the one hand, we use the technical developed in the purely even case by Charles H. Conley and Valentin Ovsienko in \cite{CoOv12}: to prove the existence of equivariant quantization on the super circles $S^{1|n}$, we consider the symbols represented by the polynomials and the Lie derivative represented by the differential operators. On the other hand, we use the method of Casimir operators to prove the uniqueness of the equivariant quantization in the same way as the one linked to the Casimir operators used by P. Mathonet and F. Radoux in \cite{MatRad11} to built the $\pgl(p+1|q)$-equivariant quantization on $\R^{p|q}$. \\
 
 The paper is organized as follows. In Section 2, we recall the definitions of the objects that occur in the problem of quantization such as densities, differential operators and symbols and some tools exposed in \cite{CoOv12} to define the fine symbols. In Section 3, we expose the tools that we are going to use to build the fine quantization and we show the computations for the existence of the fine $\spo(2|n)$-equivariant quantization. In the section 4, we compute the Casimir operators on the symbols and on the differential operators in order to prove the uniqueness of the fine $\spo(2|n)$-equivariant quantization on the super circles $S^{1|n}$ for an arbitrary integer $n\geqslant 3$. \\
 We mention here that our construction of the fine equivariant quantization is not applicable in the case of $S^{1|2}$ because the $\mathfrak{sl}(2|2)$-equivariant quantization doesn't induce a $\spo(2|2)$-equivariant quantization.       

\section{Notation and problem setting}

In this section, we recall some tools pertaining to the problem of equivariant quantization such as weighted densities, differential operators, symbols, contact projective vector fields on $S^{1|n}$. Our description is the generalization of the one described in \cite{MelNibRad13}.
\subsection{Superfunctions on $S^{1|n}$}
We define the geometry of the superspace $S^{1|n}$, where $n\in\N^*$, by describing its associative supercommutative superalgebra of superfunctions on $S^{1|n}$ which we denote by $C^\infty(S^{1|n})$ and which is constituted by the elements 
 \begin{align*}
f(x,\theta)&=\sum_{0\leqslant|I|\leqslant q}{f_I(x)\theta_I}\\
&=f_0(x)+f_1(x)\theta_1+...+f_n(x)\theta_n+f_{12}(x)\theta_1\theta_2+...
+f_{1...n}(x)\theta_1...\theta_n
\end{align*}
where $|I|$ is the length of $I$, $x$ is the coordinate corresponding to one of the affine coordinates system on $\R P^1$, $\theta=(\theta_i),\quad i=1,\cdots, n$ is odd Grassmann coordinates, i.e. $\theta_i^2=0,\quad \theta_i\theta_j=-\theta_j\theta_i$ and where $f_I(x)\in C^\infty(S^1)$ are functions on $S^1$. We define the parity function $\tilde{.}$ by setting $\tilde{x}=0$ and $\tilde{\theta}=1$.

\subsection{Vector fields on $S^{1|n}$}
A vector field on $S^{1|n}$ is a superderivation of the associative supercommutative superalgebra $C^\infty(S^{1|n})$. In coordinates, it can be expressed as 
\[
X=f\partial_{x}+\sum_{i=1}^ng^i\partial_{\theta_i},
\]
where $f$ and $g^i$ are the elements of  $C^\infty(S^{1|n})$, $\partial_{x}=\frac{\partial}{\partial x}$ and $\partial_{\theta_i}=\frac{\partial}{\partial \theta_i}$ for all $i=1,2,\cdots,n$. \\

The parity function $\tilde{.}$ on vector field $X$ is defined as 
\[
\widetilde{\partial_{x_i}}=0\quad \mbox{and}\quad \widetilde{\partial_{\theta_i}}=1.
\]
The superspace of all vector fields on $S^{1|n}$ is a Lie superalgebra which we shall denote by $\mathrm{Vect}(S^{1|n})$.

\subsection{The Lie superalgebra of contact vector fields on $S^{1|n}$}

The standard contact structure on $S^{1|n}$ is defined by the linear distribution $$\langle\overline{D}_1,\overline{D}_2,\ldots,\overline{D}_n \rangle,$$ i.e  generated by the following odd vector fields
\[
\overline{D}_1=\partial_{\theta_1}-\theta_1\partial_x,\quad \overline{D}_2=\partial_{\theta_2}-\theta_2\partial_x,\quad\ldots,\quad\overline{D}_n=\partial_{\theta_n}-\theta_n\partial_x.
\]
It can also be seen that the contact structure on $S^{1|n}$ is spanned by the kernel of the $1$-form on $S^{1|n}$ defined by 
$$\alpha=dx+\sum_{i=1}^n\theta_id\theta_i.$$
\begin{df}\label{ContactVector}
A vector field on $S^{1|n}$ is called a contact vector field if it preserves the contact distribution, that is, satisfies the condition:
\[
[X,\overline{D}_1]=\sum_i\psi_{X_i}\overline{D}_i,\quad [X,\overline{D}_2]=\sum_i\phi_{X_i}\overline{D}_i,\quad\ldots,\quad [X,\overline{D}_n]=\sum_i\sigma_{X_i}\overline{D}_i,
\] where $\psi_{X_i},\phi_{X_i},\ldots \sigma_{X_i}$ are functions depending on $X$. The superspace of contact vector fields is a Lie superalgebra which we shall denote by $\mathcal{K}(n)$\footnote{ The notation $\mathcal{K}(1|n)$ can be also used in place of $\mathcal{K}(n)$.}. 
\end{df}
It is well-known (see in \cite{Nib15}) that every contact vector field on $S^{1|n}$ can be expressed, for some functions $f\in C^\infty(S^{1|n})$, by
\begin{equation}\label{Hamilton}
X_f= f\partial_x-(-1)^{\tilde{f}}\frac{1}{2}\sum_{i=1}^n\overline{D}_i(f)\overline{D}_i.
\end{equation}
The superfunction $f$ is called a contact Hamiltonian of the vector field $X_f$.
The following result is important.
\begin{prop}
For a fixed  $m\in\mathbb{N}^*$, the condition given in definition \ref{ContactVector} becomes
$$
[X_f,\overline{D}_m]=\frac{1}{2}\sum_{i=1}^n\overline{D}_m\overline{D}_i(f)\overline{D}_i. 
$$
\end{prop}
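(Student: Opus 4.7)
The identity is proved by a direct computation of the graded commutator, exploiting the fundamental anticommutation relation $\{\overline{D}_i,\overline{D}_j\}=-2\delta_{ij}\partial_x$ (an immediate consequence of $\overline{D}_i=\partial_{\theta_i}-\theta_i\partial_x$) together with the fact that $\partial_x$ graded-commutes with $\overline{D}_m$. Using formula \eqref{Hamilton}, I would split the bracket as
\[
[X_f,\overline{D}_m]=[f\partial_x,\overline{D}_m]-\tfrac{(-1)^{\tilde f}}{2}\sum_{i=1}^n[\overline{D}_i(f)\overline{D}_i,\overline{D}_m],
\]
and evaluate each piece separately.

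For the first piece, the graded Leibniz rule (together with $[\partial_x,\overline{D}_m]=0$) yields directly $[f\partial_x,\overline{D}_m]=-(-1)^{\tilde f}\,\overline{D}_m(f)\,\partial_x$. For the second piece, I would expand each summand $[\overline{D}_i(f)\overline{D}_i,\overline{D}_m]$ by moving $\overline{D}_m$ past the multiplication operator $\overline{D}_i(f)$, which has parity $\tilde f+1$. This produces two contributions: a term of the form $-(-1)^{\tilde f}\,\overline{D}_m\overline{D}_i(f)\,\overline{D}_i$ coming from the derivation action on $\overline{D}_i(f)$, and a remainder that reassembles into $\overline{D}_i(f)\,\{\overline{D}_i,\overline{D}_m\}=-2\delta_{im}\,\overline{D}_i(f)\,\partial_x$.

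Summing over $i$ and collecting, the two $\partial_x$-contributions (one from $[f\partial_x,\overline{D}_m]$ and one from the $i=m$ term of the second piece, multiplied by the prefactor $-\tfrac{(-1)^{\tilde f}}{2}$) appear with opposite signs and cancel exactly, leaving $\tfrac{1}{2}\sum_{i=1}^n \overline{D}_m\overline{D}_i(f)\,\overline{D}_i$ as claimed. The main obstacle is nothing conceptual but purely sign bookkeeping: one must keep track of the Koszul signs $(-1)^{\tilde f}$ and $(-1)^{\tilde f+1}$ generated each time $\overline{D}_m$ crosses an odd object, and verify that the prefactor $-\tfrac{(-1)^{\tilde f}}{2}$ of the second piece combines precisely with the sign produced by the commutator so that the $\partial_x$-terms annihilate one another.
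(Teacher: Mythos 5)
Your computation is correct and follows exactly the route the paper intends: substitute the contact Hamiltonian formula \eqref{Hamilton} for $X_f$ into the bracket of Definition \ref{ContactVector} and evaluate the graded commutator directly, using $\{\overline{D}_i,\overline{D}_j\}=-2\delta_{ij}\partial_x$ so that the two $\partial_x$-contributions cancel. The paper's proof is just the one-line instruction to perform this substitution, so your proposal is the same argument, merely carried out in detail.
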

\begin{proof}
It is sufficient to compute the formula given in definition \ref{ContactVector} while replacing $X$ by the formula \eqref{Hamilton} of $X_f$.
\end{proof}
\begin{rmk}
It is obvious that $[X_f,\overline{D}_m]=\psi_{1}\overline{D}_1+\psi_2\overline{D}_2+\ldots+\psi_n\overline{D}_n$, where $\psi_i\in C^\infty(S^{1|n})$ for all $i=1,\ldots, n$ depending on $X_f$.
\end{rmk}
When we define the following Lagrange bracket on $C^\infty(S^{1|n})$
\begin{equation}\label{Lagrange}
\{f,g\}=fg'-f'g-(-1)^{\tilde{f}}\frac{1}{2}\sum_{i=1}^n\overline{D}_i(f)\overline{D}_i(g),
\end{equation} where $f'=\partial_xf$, we obtain a structure of Lie superalgebra on $C^\infty(S^{1|n})$.
We can thus identify the Lie superalgebra $C^\infty(S^{1|n})$ with the Lie superalgebra $\mathcal{K}(n)$
\subsection{The Lie superalgebra $\spo(2|n)$}
As done in \cite{Nib15,Nib14}, one can see that the Lie superalgebra $\spo(2|n)$ is the intersection of the Lie superalgebra $\mathcal{K}(n)$ and the Lie superalgebra of projective vector fields $\pgl{(2|n)}$ exposed in \cite{MatRad11}. The Lie superalgebra $\spo(2|n)$ is then called the Lie superalgebra of the contact projective vector fields. 
Thus $\spo(2|n)$ is a $(n+2|2n)$-dimensional Lie superalgebra spanned by the following  contact projective vector fields
\[
2X_x,X_{x^2}, -X_1, 2X_{\theta_{j-1}\theta_j},2X_{\theta_i},-2X_{x\theta_i},\quad i=1,\ldots,n\quad\mbox{and}\; j=2,\ldots,n
\]
Explicitly, we obtain (see \cite{{MelNibRad13}, {Nib15},{Nib14},{MatRad11}}) those vector fields when we realize the embedding of a Lie superalgebra of matrices belonging to $\mathfrak{gl}(2|n)$ into $\mathrm{Vect}(S^{1|n})$.
The Lie superalgebra spanned by the contact projective vector fields associated with the contact Hamiltonians $$\{1,x,\theta_i,\theta_{j-1}\theta_j,\quad j=2,\ldots,n\}$$ will be called \texttt{affine Lie superalgebra} and it will be denoted by $\mathrm{Aff}(2|n)$.
\subsection{Modules of weighted densities}

We define an action of $X_f$ on $C^\infty(S^{1|n})$ by
\begin{equation}\label{action}
L_{X_f}^\lambda(g)=X_f(g)+\lambda f'g,\quad \forall g\in C^\infty(S^{1|n})
\end{equation} and where $\lambda$ is an arbitrary number and $f'=\partial_xf$.
We thus obtain a familly of differential operators of order one on $C^\infty(S^{1|n})$, denoted by $L_{X_f}^\lambda$.
It is clear that
\[
[L_{X_f}^\lambda,L_{X_g}^\lambda]=L_{[X_f,X_g]}^\lambda=L_{X_{\{f,g\}}}^\lambda,
\]
where $\{f,g\}$ is the Lagrange bracket of the superfunctions $f$ and $g$. The correspondance $X_f\mapsto L_{X_f}^\lambda$ is a homomorphism of Lie superalgebras.
\begin{df}
The module $\mathcal{F}_\lambda(S^{1|n})$ of $\lambda$-contact densities on $S^{1|n}$ is the space $C^\infty(S^{1|n})$ endowed with the action \eqref{action} of $\mathcal{K}(n)$. One can write any $\lambda$-contact density as $g\alpha^\lambda$ where $\alpha$ is a contact $1$-form on $S^{1|n}$ and for an arbitrary superfunction $g$.
\end{df}
We can also define another type of action $\mathbb{L}_X^\lambda$ of an element $X$ of $\mathrm{Vect}(S^{1|n})$ on the space of superfunctions $C^\infty(S^{1|n})$ as follow
\begin{equation}\label{Action}
\mathbb{L}_X^\lambda(g):=X(g)+\lambda \mathrm{div}(X)g,\forall g\in C^\infty(S^{1|n}),
\end{equation}
where $\mathrm{div}(X)=f'+\sum_{i=1}^n(-1)^{\widetilde{g^i}}\partial_{\theta_i}g^i$.
We can show that for all $X,Y\in\mathrm{Vect}(S^{1|n}) $, we have
\[
[\mathbb{L}_X^\lambda,\mathbb{L}_Y^\lambda]=\mathbb{L}_{[X,Y]}^\lambda
\] 
\begin{df}
The module $\mathrm{Ber}_\lambda(S^{1|n})$ of tensor densities of weight $\lambda\in\R$ on $S^{1|n}$ is the space of superfunctions $C^\infty(S^{1|n})$ endowed with the action \eqref{Action} of $\mathrm{Vect}(S^{1|n})$. One can write for $g\in C^\infty(S^{1|n})$, any $\lambda$-tensor density as $g|Dx|^\lambda$.
\end{df}
The following remark is important.
\begin{rmk}
If the superdimension $m=1-n$ is different to $-1$, then the application 
\begin{equation}\label{IntertwAction}
\varphi:\mathcal{F}_\lambda(S^{1|n})\to\mathrm{Ber}_{\frac{2\lambda}{m+1}}(S^{1|n}):g\alpha^\lambda\mapsto g\vert Dx\vert^{\frac{2\lambda}{m+1}}
\end{equation} is an isomorphism of $\mathcal{K}(n)$-modules.

To show that $\varphi$ is bijective, it sufficient to compute that the application $\varphi$ intertwines the action of $X_f$ on the spaces $\mathcal{F}_\lambda(S^{1|n})$ and $\mathrm{Ber}_{\frac{2\lambda}{m+1}}(S^{1|n})$, i.e.
\[
\varphi(L^\lambda_{X_f}(g\alpha^\lambda))=\mathbb{L}^\frac{2\lambda}{m+1}_{X_f}(\varphi(g\alpha^\lambda)).
\]
\end{rmk}

\subsection{Modules of Differential operators and symbols}
\begin{df}
If we denote $(q^I)=(x,\theta^i)$, $i\in [1,n]$ the coordinates system of Darboux on $S^{1|n}$, we call differential operator $D$ of order $k$ on $S^{1|n}$, the application which maps on $\mathcal{F}_\lambda(S^{1|n})$ to $\mathcal{F}_\mu(S^{1|n})$ and it can be written in coordinates by
\[D:f\alpha^{\lambda}\mapsto(\sum_{I:|I|\leq k}{A_I\partial_{q^I}f)\alpha^{\mu}  
},\]
where $I=(i_0,i_1,\ldots,i_{n})$ is a multi-index, $|I|=i_{0}+i_1+\ldots+i_{n}$
 and $A_I$ is a superfunction for all $I$.
\end{df}

More explicitly, a differential operator $D$ can be written in coordinates by
 \begin{equation}\label{opdiff}
 \sum_{I:|I|\leq k}{A_I(\partial_{x})^{i_0}(\partial_{\theta_1})^{i_{1}}\ldots(\partial_{\theta_n})^{i_{n}}}.
 \end{equation}
Because $\partial_{\theta_i}^2=0$, the exponents $i_{1},\ldots,i_{n}$ in the expression \eqref{opdiff} are at most equal to $1$.

The differential operators of order $0$ are simply the multiplication by $(\mu-\lambda)$-densities.
We define the space of differential operators as follow
  \[
\mathcal{D}_{\lambda\mu}(S^{1|n})=\bigcup_{k=0}^\infty\mathcal{D}_{\lambda\mu}^k(S^{1|n}).
\]
If $D\in \mathcal{D}_{\lambda\mu}(S^{1|n})$ and if $X\in\mathcal{K}(n)$, then the action of $X$ on $\mathcal{D}_{\lambda\mu}(S^{1|n})$ is defined by the Lie derivative $\mathcal{L}_X^{\lambda\mu}$ via the following supercommutator  

   \begin{equation}\label{optoraction}
 \mathcal{L}_X^{\lambda\mu}D=L_X^{\mu}\circ D-(-1)^{\tilde{X}\tilde{D}}D\circ L_X^{\lambda}
 \end{equation} where $L_X^\lambda$ and $L_X^\mu$ are defined in \eqref{action}.
  We have thus a structure of $\mathcal{K}(n)$-module  on the space $\mathcal{D}_{\lambda\mu}(S^{1|n})$. \\
  
  The graded space associated to the space $\mathcal{D}_{\lambda\mu}(S^{1|n})$ is called the space of symbols and denoted by $\mathcal{S}_\delta(S^{1|n})$. It is defined in the following way
  \[
  \mathcal{S}_\delta(S^{1|n}):=\mathcal{D}^k_{\lambda\mu}(S^{1|n})/\mathcal{D}^{k-1}_{\lambda\mu}(S^{1|n}),\quad \delta=\lambda-\mu.
  \]
 We define also the following surjective application on $\mathcal{D}^k_{\lambda\mu}(S^{1|n})$ 
   \[
\sigma_k:\mathcal{D}^k_{\lambda\mu}(S^{1|n})\to\mathcal{S}_\delta(S^{1|n}):D\mapsto[D],   
   \]
  where the notation $[D]$ means the equivalence class of $D$. The action of $\mathcal{K}(n)$ on $\mathcal{S}^k_\delta(S^{1|n})$, denoted by $L_X^\delta$ is given by
  \[
L_X^\delta(S)=[\mathcal{L}^{\lambda\mu}_X(D)],  
  \]where $S=[D]$ is an element of $\mathcal{S}^k_\delta(S^{1|n})$.\\
  
When $S^{1|n}$ is endowed with the standard contact $\alpha$, then the superderivations are spanned on $C^\infty(S^{1|n})$ by the field of Reeb $\partial_x$ and by the vector fields $\overline{D}_i$ where $i=1,\ldots,n$. This allows us to define on the space of differential operators a different filtration which induces another type of symbols.

\begin{df}
If $K=(i_1,\ldots,i_n)$ is the multi-index of length $|K|=i_1+\ldots+i_n$ and if $D$ is an differential operator of order $k$ acting between $\lambda$- and $\mu$-densities on $S^{1|n}$, then $D$ is of the form \begin{equation}\label{NORMALOPDIFF}
\sum_{K:c+|K|\leqslant k}A_{cK}\partial_{x}^{c}\overline{D}^K,
\end{equation}
where $A_{cK}$ is a superfunction and $\overline{D}^K=\overline{D}^{i_1}_1\cdots \overline{D}^{i_{n}}_{n}$.
\end{df} 
We can now define the differential operator with an order of Heisenberg $d$ where one has the order $d\in \frac{1}{2}\mathbb{N}$ whereas the order $k\in\mathbb{N}$.

\begin{df}
If $D$ is a differential operator of the form \eqref{NORMALOPDIFF}, then we say that $D$ has an order of Heisenberg equal to $d$ if $c+\frac{1}{2}|K|\leqslant d$ for all $c,K$. We denote by $\mathcal{H}^d_{\lambda\mu}(S^{1|n})$ the space of differential operators of order of Heisenberg equal to $d$ on $S^{1|n}$.
\end{df}
We can see that this space of differential operators is therefore filtered by the order of Heisenberg. Indeed, the total space $\mathcal{H}_{\lambda\mu}(S^{1|n})$ is the union of the spaces $\mathcal{H}^d_{\lambda\mu}(S^{1|n})$, i.e.
 \[
\mathcal{H}_{\lambda\mu}(S^{1|n})=\bigcup_{d\in\frac{1}{2}\N}\mathcal{H}^d_{\lambda\mu}(S^{1|n}). 
 \]
 Because $\mathcal{H}^d_{\lambda\mu}(S^{1|n})\subset\mathcal{H}^{d+\frac{1}{2}}_{\lambda\mu}(S^{1|n})$, we have the following inclusions 

\[
 \mathcal{H}^0_{\lambda\mu}(S^{1|n})\subset\mathcal{H}^{\frac{1}{2}}_{\lambda\mu}(S^{1|n})\subset\mathcal{H}^1_{\lambda\mu}(S^{1|n})\subset\cdots\subset\mathcal{H}^d_{\lambda\mu}(S^{1|n})\subset\mathcal{H}^{d+\frac{1}{2}}_{\lambda\mu}(S^{1|n})\subset\cdots
 \] for all $d\in\frac{1}{2}\N$.

\begin{df}
The graded space associated to the space $\mathcal{H}^d_{\lambda\mu}(S^{1|n})$
is denoted by $\mathcal{P}_\delta(S^{1|n})$. We have therefore
\begin{equation}\label{PD}
\mathcal{P}_{\delta}(S^{1|n}):=\bigoplus_{d\in\frac{1}{2}\N} \mathcal{P}_{\delta}^{d}(S^{1|n}):=\sum_{d\in\frac{1}{2}\N} \mathcal{H}_{\lambda\mu}^{d}(S^{1|n})/\mathcal{H}_{\lambda\mu}^{d-\frac{1}{2}}(S^{1|n}),
\end{equation} where $\delta=\mu-\lambda$.
\end{df}

The canonical projection defines the application $h\sigma$ called Heisenberg symbol map as follow
\[{\rm h}\sigma:\mathcal{H}_{\lambda\mu}^{d}(S^{1|n})\to\mathcal{P}_{\delta}^{d}(S^{1|n}):D\mapsto [D],\]
where $[D]$ means the equivalence class of $D$ in the quotient $\mathcal{H}_{\lambda\mu}^{d}(S^{1|n})/\mathcal{H}_{\lambda\mu}^{d-\frac{1}{2}}(S^{1|n})$.

\subsection{Modules of fine symbols}
 We generalize in super case, the model used in the even case by C.Conley and V.Ovsienko in \cite{CoOv12}.

\begin{df}
We define a bifiltration on $\mathcal{D}_{\lambda\mu}(S^{1|n})$ by
\[
\mathcal{D}_{\lambda\mu}^{k,d}(S^{1|n}):=\mathcal{D}_{\lambda\mu}^{k}(S^{1|n})\cap\mathcal{H}_{\lambda\mu}^{d}(S^{1|n}). 
\] 
The bigraded space $\Sigma_\delta(S^{1|n})$ associated to the bifiltration $\mathcal{D}_{\lambda\mu}^{k,d}(S^{1|n})$ is defined by
\begin{equation}\label{BIGRADED}
\Sigma_{\delta}(S^{1|n})=\bigoplus_{k=0}^\infty\bigoplus_{d\in\frac{1}{2}\N}\Sigma_{\delta}^{k,d}(S^{1|n})=\bigoplus_{k=0}^\infty\bigoplus_{d\in\frac{1}{2}\N} \mathcal{D}_{\lambda\mu}^{k,d}(S^{1|n})/(\mathcal{D}_{\lambda\mu}^{k-1,d}(S^{1|n})+\mathcal{D}_{\lambda\mu}^{k,d-\frac{1}{2}}(S^{1|n})).
\end{equation}
The elements of $\Sigma_{\delta}(S^{1|n})$ are called fine symbols.
\end{df}
We define accordingly the fine symbol map by
\[{\rm f}\sigma_{k,d}:\mathcal{D}_{\lambda\mu}^{k,d}(S^{1|n})\to\Sigma_{\delta}^{k,d}(S^{1|n}):D\mapsto [D],\] where the bracket means the equivalence class of $D$
in $\mathcal{D}_{\lambda\mu}^{k,d}(S^{1|n})/(\mathcal{D}_{\lambda\mu}^{k-1,d}(S^{1|n})+\mathcal{D}_{\lambda\mu}^{k,d-\frac{1}{2}}(S^{1|n}))$.
To justify the terminology of fine symbol, we refer to \cite{CoOv12}.

If we define an isomorphism between $\mathcal{P}_\delta^d(S^{1|n})$ and a sub-space of $\mathcal{F}_\delta\otimes\mathrm{Pol}(T^*S^{1|n})$ as follow
\begin{equation}\label{ISOMOR_POLYN0}
\varphi:\sum_{\substack{c,K\\c+\frac{1}{2}|K|\leqslant d}}{A_{cK}\partial_{x}^{c}\overline{D}^K}+\mathcal{H}_{\lambda,\mu}^{d-\frac{1}{2}}(S^{1|n})\mapsto\sum_{\substack{c,K\\c+\frac{1}{2}|K|=d}}{A_{cK}\alpha^\delta\zeta^{c}\gamma^K}
\end{equation}

where $\zeta$ and $\gamma$ are respectively the moments associated to the vector fields $\partial_x$ and $\overline{D}_i$, then we can see that the space $\Sigma^{k,d}(S^{1|n})$ is isomorphic to a sub-space of $\mathcal{F}_\delta\otimes\mathrm{Pol}(T^*S^{1|n})$ in the following way

\begin{equation}\label{ISOMOR_POLYN02}
\varphi:\sum_{\substack{c,K\\c+\frac{1}{2}|K|\leqslant d\\c+|K|\leqslant k}}A_{cK}{\partial_{x}^{c}\bar{D}^K}+(\mathcal{D}_{\lambda,\mu}^{k,d-\frac{1}{2}}(S^{1|n})+\mathcal{D}_{\lambda,\mu}^{k-1,d}(S^{1|n}))\mapsto
\sum_{\substack{c,K\\c+\frac{1}{2}|K|=d\\c+|K|=k}}A_{cK}{\alpha^\delta\zeta^{c}\gamma^K}.
\end{equation}
One can see that the space of symbols $\mathcal{P}_\delta(S^{1|n})$ can be expressed with the direct sum of $\Sigma^{k,d}_\delta(S^{1|n})$:
\[
\mathcal{P}_\delta(S^{1|n})=\bigoplus_{d\in\frac{1}{2}\mathbb{N}}\mathcal{H}^d(S^{1|n})/\mathcal{H}^{d-\frac{1}{2}}(S^{1|n})=\bigoplus_{d\in\frac{1}{2}\mathbb{N}}\bigoplus_{k=\lceil d\rceil}^{2d}\Sigma_\delta^{k,d}(S^{1|n})
\]
where $\lceil x\rceil:=\mathrm{inf}\{n\in\N:n\geqslant x\}$.

The following theorem gives the explicit formulas of the actions of the Lie superalgebra $\spo(2|n)$ on the spaces 
$\mathcal{S}_{\delta}(S^{1|n})$ and $\mathcal{P}_\delta(S^{1|n})$ and on the fine symbol space $\Sigma_\delta(S^{1|n})$.

\begin{theorem}\label{ActionFormulas}
If $X_f\in \spo(2|n)$ and if we denote by $L_{X_f}^{\mathcal{P}}$ (resp. $L_{X_f}^{\Sigma}$, resp. $L_{X_f}^{\mathcal{S}}$ ) the actions of $X_f$ on $\mathcal{P}_{\delta}(S^{1|n})$ (resp. $\Sigma_{\delta}(S^{1|n})$; resp. $\mathcal{S}_{\delta}(S^{1|n})$) then these actions are given 
\begin{enumerate}
\item[i)] firstly by $
L^\Sigma_{X_f}=f\partial_x+\partial_x(f)\left(\delta-\zeta\partial_\zeta\right)
-\frac{1}{2}(-1)^{\tilde{f}}\sum_{i=1}^n\overline{D}_i(f)\overline{D}_i
+\frac{1}{2}\sum_{j,k=1}^n\overline{D}_j\overline{D}_k(f)\gamma_k
\partial_{\gamma_j}.
$
\item[ii)] secondly by $L_{X_f}^{\mathcal{P}} = L_{X_f}^{\Sigma}$, 

\item[iii)] and finally by $
L^S_{X_f}= L^\Sigma_{X_f}+\frac{1}{2}(-1)^{\tilde{f}}\sum_{i=1}^n\overline{D}_i(f')\gamma_i\partial_{\zeta}, 
$
\end{enumerate}
where the notations $\partial_x$ and $\overline{D}_i$ denote the action of the vector fields $\partial_x$ and $\overline{D}_i$ on the coefficients of the symbol.
\end{theorem}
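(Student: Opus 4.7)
The plan is to work from the definition \eqref{optoraction} of the Lie derivative $\mathcal{L}^{\lambda\mu}_{X_f}$ applied to a representative operator $D = A_{cK}\partial_x^c \overline{D}^K$ in the normal form \eqref{NORMALOPDIFF}, compute it explicitly, and then project to the three quotients defining $\mathcal{S}_\delta$, $\mathcal{P}_\delta$ and $\Sigma_\delta$. Writing $L_{X_f}^\lambda = X_f + \lambda f'\cdot$ (and similarly with $\mu$), one splits
\[
\mathcal{L}^{\lambda\mu}_{X_f}(D) \;=\; [X_f, D] \;+\; \mu f' D \;-\; (-1)^{\tilde f\tilde D}\lambda\, D\circ(f'\cdot).
\]
Using the super-Leibniz rule on $D\circ(f'\cdot)$, the last two pieces collapse to $\delta f' D$ modulo terms of strictly lower classical order, which accounts for the "$\partial_x(f)\delta$" contribution in the final formulas.

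The main computation is the supercommutator $[X_f, A\partial_x^c\overline{D}^K]$, which I would expand as $X_f(A)\partial_x^c\overline{D}^K + (-1)^{\tilde f\tilde A} A\,[X_f,\partial_x^c\overline{D}^K]$. The first summand contributes the operator $f\partial_x - \tfrac{1}{2}(-1)^{\tilde f}\sum_i \overline{D}_i(f)\overline{D}_i$ acting on the coefficient $A_{cK}$, which matches the corresponding piece of $L^\Sigma_{X_f}$. For the second I would iterate the two commutators
\[
[X_f,\partial_x] = -X_{f'} = -f'\partial_x + \tfrac{(-1)^{\tilde f}}{2}\sum_i \overline{D}_i(f')\overline{D}_i, \qquad [X_f,\overline{D}_m] = \tfrac{1}{2}\sum_i \overline{D}_m\overline{D}_i(f)\overline{D}_i,
\]
the first coming from $[X_1, X_f] = X_{\{1,f\}} = X_{f'}$ and the second being the Proposition following Definition \ref{ContactVector}. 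Telescoping $[X_f,\partial_x^c]$ yields a $-c f'\partial_x^c$ contribution which combines with $\delta f' D$ from the density correction to give the prefactor $(\delta - c)f'$, i.e.\ the operator $\partial_x(f)(\delta - \zeta\partial_\zeta)$ in symbol form. The remaining pieces of $[X_f,\partial_x^c]$ produce the term $\tfrac{1}{2}(-1)^{\tilde f}\sum_i \overline{D}_i(f')\gamma_i\partial_\zeta$, while peeling $X_f$ off the product $\overline{D}^K$ one factor at a time (carrying the Koszul signs) gives the last summand $\tfrac{1}{2}\sum_{j,k}\overline{D}_j\overline{D}_k(f)\gamma_k\partial_{\gamma_j}$.

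The hypothesis $X_f \in \spo(2|n)$, i.e.\ $f$ in the span of $\{1, x, x^2, \theta_i, x\theta_i, \theta_{j-1}\theta_j\}$, ensures that iterated derivatives of $f$ of sufficiently high order vanish, so the expansion is finite and no residual terms appear. Projecting to $\mathcal{S}_\delta^k = \mathcal{D}^k_{\lambda\mu}/\mathcal{D}^{k-1}_{\lambda\mu}$ preserves the total classical order and yields $L^\mathcal{S}_{X_f}$ exactly as stated. Projecting instead to $\mathcal{P}_\delta^d = \mathcal{H}^d_{\lambda\mu}/\mathcal{H}^{d-\frac{1}{2}}_{\lambda\mu}$ kills the extra term $\tfrac{1}{2}(-1)^{\tilde f}\sum_i\overline{D}_i(f')\gamma_i\partial_\zeta$, since $\gamma_i\partial_\zeta$ strictly decreases the Heisenberg order by $\tfrac{1}{2}$; the remaining terms all preserve both the classical order $k$ and the Heisenberg order $d$, hence $L^\mathcal{P} = L^\Sigma$ and the same expression descends unchanged to $\Sigma_\delta^{k,d}$. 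The main obstacle I expect is the Koszul-sign bookkeeping in $[X_f,\overline{D}^K]$ and the verification that every residual term — third $\overline{D}$-derivatives of $f$, or a second $\partial_x$-derivative combined with an $\overline{D}$ — indeed lies in the lower filtration level being quotiented in each of the three cases; this is precisely where the smallness of $\spo(2|n)$ enters crucially.
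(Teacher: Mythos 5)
Your strategy is sound in outline but genuinely different from the paper's. The paper does not expand $\mathcal{L}^{\lambda\mu}_{X_f}$ on a general monomial $A_{cK}\partial_x^c\overline{D}^K$: it observes that $\Sigma(S^{1|n})$, $\mathcal{P}(S^{1|n})$ and $\mathcal{S}(S^{1|n})$ are algebras under the product of symbols, that the induced actions are derivations of these algebras, and that it therefore suffices to verify the claimed formulas on the generators $\zeta$, $\gamma_i$ and $g\alpha^\delta$ — i.e.\ to compute only the single commutators $[X_f,\partial_x]$, $[X_f,\overline{D}_i]$ and the action on densities, which are exactly the three ingredients you also use. The derivation argument buys freedom from all the Koszul-sign and telescoping bookkeeping you anticipate; your direct expansion buys an explicit formula on an arbitrary representative, at the cost of having to control every residual term by hand.

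That control is where your argument has one concrete hole, in part (ii). When you peel $X_f$ off $\overline{D}^K$ using $[X_f,\overline{D}_j]=\frac{1}{2}\sum_k\overline{D}_j\overline{D}_k(f)\overline{D}_k$, the inserted factor $\overline{D}_k$ may coincide with a factor already present in $\overline{D}^K$; since $\overline{D}_k^{\,2}=-\partial_x$, this produces a term whose classical order drops by one while its Heisenberg order is \emph{unchanged}. Such a term is not in $\mathcal{H}^{d-\frac{1}{2}}_{\lambda\mu}$, so it is not killed by the projection to $\mathcal{P}^d_\delta$, contrary to your claim that ``the remaining terms all preserve both the classical order $k$ and the Heisenberg order $d$.'' For example, with $f=\theta_1\theta_2$ and $D=\overline{D}_1\overline{D}_2$ one finds $[X_f,\overline{D}_1]\overline{D}_2=-\frac{1}{2}\overline{D}_2^{\,2}=\frac{1}{2}\partial_x$, which lives in $\Sigma^{1,1}_\delta$, not below $\mathcal{H}^1_{\lambda\mu}$. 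The identity $L^{\mathcal{P}}_{X_f}=L^{\Sigma}_{X_f}$ is rescued only because these contributions cancel in pairs, by the antisymmetry $\overline{D}_j\overline{D}_k(f)=-\overline{D}_k\overline{D}_j(f)$ for $j\neq k$ (in the example, $\overline{D}_1[X_f,\overline{D}_2]=\frac{1}{2}\overline{D}_1^{\,2}=-\frac{1}{2}\partial_x$ kills the previous term). You must either prove this cancellation explicitly or switch to the paper's derivation-on-generators route, where it is automatic because $\gamma_k^2=0$ holds identically in the symbol algebra. The rest of your outline — the $\delta f'$ term from the density weights, the $-cf'$ from telescoping $[X_f,\partial_x^c]$, and the identification of the extra term $\frac{1}{2}(-1)^{\tilde f}\sum_i\overline{D}_i(f')\gamma_i\partial_\zeta$ as surviving only in $\mathcal{S}_\delta$ because it preserves classical order while lowering Heisenberg order — is correct.
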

\begin{proof}
The spaces $\Sigma(S^{1|n}):=\bigcup_{\delta\in\mathbb{R}}\Sigma_{\delta}(S^{1|n})$, $\mathcal{P}(S^{1|n}):=\bigcup_{\delta\in\mathbb{R}}\mathcal{P}_{\delta}(S^{1|n})$ and $\mathcal{S}(S^{1|n}):=\bigcup_{\delta\in\mathbb{R}}\mathcal{S}_{\delta}(S^{1|n})$ are actually algebras for the canonical product of symbols. We can consider the operators  $\tilde{L}_{X_f}^{\Sigma}$, $\tilde{L}_{X_f}^{\mathcal{P}}$ and $\tilde{L}_{X_f}^{\mathcal{S}}$  acting respectively on $\Sigma(S^{1|n})$, $\mathcal{P}(S^{1|n})$ and $\mathcal{S}(S^{1|n})$ whose the restrictions on the spaces $\Sigma_{\delta}(S^{1|n})$, $\mathcal{P}_{\delta}(S^{1|n})$ and $\mathcal{S}_{\delta}(S^{1|n})$ are given by $L_{X_f}^{\Sigma}$, $L_{X_f}^{\mathcal{P}}$ and $L_{X_f}^{\mathcal{S}}$ for all $\delta\in\mathbb{R}$. The operators $\tilde{L}_{X_f}^{\Sigma}$, $\tilde{L}_{X_f}^{\mathcal{P}}$ and $\tilde{L}_{X_f}^{\mathcal{S}}$ are actually derivations of the spaces $\Sigma(S^{1|n})$, $\mathcal{P}(S^{1|n})$ and $\mathcal{S}(S^{1|n})$.
We can compute the actions $L_{X_f}^\Sigma$ and $L_{X_f}^\mathcal{S}$ on the generators $\zeta$, $\gamma_i$ and $g\alpha^\delta$ of the spaces $\Sigma_\delta(M)$ and $\mathcal{S}_\delta(M)$.\\
If the application of $L_{X_f}^\Sigma$ and $L_{X_f}^\mathcal{S}$ on those generators coincide with the actions of the second members of equations $(i)$ and $(iii)$ of the theorem \ref{ActionFormulas} on the same generators and thanks that the right members of those equations are the derivation operators, then the equations $(i)$ and $(iii)$ of the theorem \ref{ActionFormulas} are verified.\\
Thanks to the isomorphism $\varphi$ defined by \eqref{ISOMOR_POLYN0}, we compute the operators $L_{X_f}^\Sigma$ and $L_{X_f}^\mathcal{S}$ on the generators $\zeta, \gamma_i$ and $g\alpha^\delta$ of the spaces $\Sigma_\delta(M)$ and $\mathcal{S}_\delta(M)$ by using  respectively the Lie derivative of differential operators $\partial_x,\overline{D}_i$ and $g\alpha^\delta$.\\
We obtain 
\[
L^\Sigma_{X_f}(\overline{D}_i)=[X_f,\overline{D}_i]+\delta f'\overline{D}_i
\]
Since the commutator $[\overline{D}_i,\overline{D}_s]$ is equal to $-2\partial_z$ and using the isomorphism $\varphi$ given by \eqref{ISOMOR_POLYN0}, we obtain
\begin{equation}\label{Lie1}
L_{X_f}^\Sigma(\gamma_i)=\left(\delta f'\Id+\frac{1}{2}\sum_{j,k=1}^n\overline{D}_j\overline{D}_k(f)\gamma_k\partial_{\gamma_j}\right)(\gamma_i).
\end{equation}
If we compute $L_{X_f}^\Sigma(\partial_x)$ and using the isomorphism $\varphi$ given by \eqref{ISOMOR_POLYN0}, we obtain
\begin{equation}\label{Lie2}
L_{X_f}^\Sigma(\zeta)=\left(\delta-1\right)f'\Id(\zeta),
\end{equation}
and finally we obtain in the same conditions
\begin{equation}\label{Lie3}
L_{X_f}^\Sigma(g\alpha^\delta)=\left(f\partial_x+\delta f'\Id-(-1)^{\tilde{f}}\frac{1}{2}\sum_{i=1}^n\overline{D}_i(f)\overline{D}_i\right)(g\alpha^\delta).
\end{equation}
We can see that if we restrict the formula given by $(i)$ in the theorem \ref{ActionFormulas} to the generators $\overline{D}_i$, $\partial_x$ and $g\alpha^\delta$, we obtain respectively the formulas (\ref{Lie1}), (\ref{Lie2}) et (\ref{Lie3}). The proof is also the same on the formula given by $(iii)$ in the theorem \ref{ActionFormulas}.

\end{proof}

\begin{rmk}
By definition of action, the applications ${\rm f}\sigma_{k,d}$ and $\sigma_k$ are $\mathcal{K}(n)$-equivariant, i.e.
\[
L_{X_f}^{\mathcal{S}}\circ\sigma_k=\sigma_k\circ\mathcal{L}_{X_f}^{\lambda\mu}\quad\mbox{on}\;\;\mathcal{D}_{\lambda\mu}^k(S^{1|n}),
\] and
\[
L_{X_f}^{\Sigma}\circ {\rm f}\sigma_{k,d}={\rm f}\sigma_{k,d}\circ \mathcal{L}_{X_f}^{\lambda\mu}\quad\mbox{on}\;\; \mathcal{D}^{k,d}_{\lambda\mu}(S^{1|n}).
\] 
\end{rmk}

\subsection{The affine quantization map and the map $\gamma$}
Here we use the tools of the construction of the quantization in the purely even situation to build the quantization in the super situation. We begin by the  classical definition of a quantization.
\begin{df}
If $\mathfrak{g}$ is a Lie superalgebra, we call a $\mathfrak{g}$-equivariant quantization on $S^{1|n}$, a linear application 
$$ Q:\mathcal{S}_\delta(S^{1|n})\to\mathcal{D}_{\lambda\mu}(S^{1|n}) $$ such that, for every $X\in \mathfrak{g}$, one has 
$$\mathcal{L}^{\lambda\mu}_X\circ Q=Q\circ L_X^\delta $$ and satisfy the condition
\[\quad\forall k\in\N,\quad\forall S\in\mathcal{S}^k_\delta(S^{1|n}),\quad \sigma_k(Q(S))=S.\]
\end{df}
Let now build (in our case) a linear even bijection $Q_{\mathrm{Aff}}$ from the space of symbols to the space of differential operators. This quantization is defined as the inverse of the total symbol map $\sigma_{\mathrm{Aff}}$ whose restriction to $\mathcal{D}_{\lambda\mu}^k$ is given in the following definition.
\begin{df}
For all $k\in\mathbb{N}$, one has
\[\sigma_{\mathrm{Aff}}:\mathcal{D}^k_{\lambda\mu}(S^{1|n}) \to \oplus_{l=0}^k\mathcal{S}_\delta^l(S^{1|n}):D\mapsto \sigma_{\mathrm{Aff}}(D),\] 
where, if $D$ is given by \eqref{opdiff}, therefore $\sigma_{\mathrm{Aff}}(D)$ is defined by
\[ 
 \sum_{I:|I|\leq k}{\left[A_I(\partial_{x})^{i_0}(\partial_{\theta_1})^{i_{1}}\ldots(\partial_{\theta_n})^{i_{n}}\right]_
 {|I|}},
\]
where $[\cdot]_r$ means the equivalence class in $\mathcal{D}^r_{\lambda\mu}(S^{1|n})/\mathcal{D}^{r-1}_{\lambda\mu}(S^{1|n})$.
\end{df}
The affine quantization intertwines the actions of the affine superalgebra.\\
As done in \cite{MatRad11,MelNibRad13}, using the affine quantization map, we can endow the space of symbols with a structure of representation of $\mathrm{Vect}(S^{1|n})$, isomorphic to $\mathcal{D}_{\lambda\mu}$. Explicitly, we set
\[\mathcal{L}_{X_f}S:=Q^{-1}_{\mathrm{Aff}}\circ\mathcal{L}^{\lambda\mu}_{X_f}\circ Q_{\mathrm{Aff}}(S), \quad\forall S\in\mathcal{S}_\delta(S^{1|n}),X_f\in \mathfrak{spo}(2|n).\]
In order to measure the difference between the representations $(\mathcal{S}_\delta,L_{X_f})$ and $(\mathcal{S}_\delta,\mathcal{L}_{X_f})$, we define the map
\begin{equation}\label{Affinemap}
\gamma:\mathfrak{spo}(2|n)\to\mathfrak{gl}(\mathcal{S}_\delta,\mathcal{S}_\delta):X\mapsto \gamma(X_f)=\mathcal{L}_{X_f}-L^\delta_{X_f}.
\end{equation}
This map can be computed in coordinates and it has the same properties as in the classical situation. As consequence of a result of P. Mathonet and F. Radoux in \cite{MatRad11}, we have the following result
\begin{prop}
The map $\gamma$ vanishes on the constant vector fields and linear vector fields. Moreover, for every quadratic vector field $X_f$ and $k\in\N$, the restriction of $\gamma(X_f)$ to $\mathcal{S}_\delta^k$ has values in $\mathcal{S}_\delta^{k-1}$ and is a differential operator of order zero and parity $\widetilde{X_f}$ with constant coefficients.
\end{prop}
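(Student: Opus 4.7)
The plan is to split the statement by the Hamiltonian degree of elements of $\spo(2|n)$ and to exploit the defining intertwining property of $Q_{\mathrm{Aff}}$.

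For the first claim, I would observe that among the generators of $\spo(2|n)$ listed just before the proposition, the constant contact vector field corresponds to $f=1$ and the linear ones to $f\in\{x,\theta_i,\theta_{j-1}\theta_j\}$; all these Hamiltonians lie in $\mathrm{Aff}(2|n)$. Since $Q_{\mathrm{Aff}}$ was constructed precisely to intertwine the $\mathrm{Aff}(2|n)$-actions, one has $Q^{-1}_{\mathrm{Aff}}\circ\mathcal{L}^{\lambda\mu}_{X_f}\circ Q_{\mathrm{Aff}}=L^\delta_{X_f}$ for every $X_f\in\mathrm{Aff}(2|n)$, hence $\gamma(X_f)=0$ on both the constant and the linear vector fields.

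For the quadratic claim the relevant Hamiltonians are $f\in\{x^2,x\theta_i\}$. I would compute $\gamma(X_f)$ on a monomial symbol $S\in\mathcal{S}^k_\delta$ directly from the definition
\[
\gamma(X_f)(S)=\sigma_{\mathrm{Aff}}\bigl(\mathcal{L}^{\lambda\mu}_{X_f}(Q_{\mathrm{Aff}}(S))\bigr)-L^\delta_{X_f}(S).
\]
Since $L^\delta_{X_f}$ preserves the polynomial degree $k$ and, by $\mathcal{K}(n)$-equivariance of the principal symbol map $\sigma_k$ combined with the formulas of Theorem \ref{ActionFormulas}, the degree-$k$ component of $\sigma_{\mathrm{Aff}}(\mathcal{L}^{\lambda\mu}_{X_f}Q_{\mathrm{Aff}}(S))$ coincides with $L^\delta_{X_f}(S)$, the difference $\gamma(X_f)(S)$ must have polynomial degree at most $k-1$. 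This gives the drop $\mathcal{S}^k_\delta\to\mathcal{S}^{k-1}_\delta$.

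The remaining structure (order zero, constant coefficients, parity $\widetilde{X_f}$) relies on the quadratic hypothesis on $f$: all derivatives $\partial_x^a\overline{D}^\beta(f)$ of total order at least three vanish and those of total order exactly two are constants. Expanding the supercommutator \eqref{optoraction} that produces $\mathcal{L}^{\lambda\mu}_{X_f}Q_{\mathrm{Aff}}(S)$, every sub-principal correction therefore has coefficients which are constants multiplying a fixed monomial in the derivative symbols; applying $Q^{-1}_{\mathrm{Aff}}$, which on the level of polynomial coefficients acts as the identity, delivers a zero-order multiplication operator on $\mathcal{S}^{k-1}_\delta$ with constant coefficients. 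The parity is forced because $\mathcal{L}^{\lambda\mu}_{X_f}$ and $L^\delta_{X_f}$ both have parity $\widetilde{X_f}$ while $Q_{\mathrm{Aff}}$ is even. The main obstacle I anticipate is the parity and sign bookkeeping inside the supercommutators together with the interplay of the standard and Heisenberg filtrations; however, this is essentially the same computation as the one performed by P. Mathonet and F. Radoux in \cite{MatRad11} for the $\pgl(p+1|q)$-equivariant quantization on $\mathbb{R}^{p|q}$, and it transfers here once one also tracks the contact correction $-\tfrac{1}{2}(-1)^{\tilde f}\sum_i\overline D_i(f)\overline D_i$ in $X_f$, whose second derivatives are again constants for quadratic $f$.
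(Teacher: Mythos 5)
Your proposal is correct and takes essentially the same route as the paper, which gives no proof of its own and simply invokes the corresponding result of Mathonet and Radoux in \cite{MatRad11}: your two ingredients (the $\mathrm{Aff}(2|n)$-equivariance of $Q_{\mathrm{Aff}}$ for the vanishing on constant and linear fields, and the fact that for quadratic Hamiltonians the second derivatives of the coefficients are constants while all higher ones vanish, so that only a single order-lowering, zero-order, constant-coefficient correction of parity $\widetilde{X_f}$ survives in the supercommutator) are exactly the content of that cited computation transferred to the contact setting.
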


We are now in position to define the notion of fine $\spo(2|n)$-equivariant quantization on $S^{1|n}$.
\begin{df}
A fine $\spo(2|n)$-equivariant quantization is a linear bijection $Q$ between $\mathcal{P}_\delta(S^{1|n})$ and $\mathcal{D}_{\lambda\mu}(S^{1|n})$ such that its restriction $Q\vert_{\mathcal{P}_\delta^d}:\mathcal{P}_\delta^d(S^{1|n})\to\mathcal{D}_{\lambda\mu}^{k,d}(S^{1|n})$ is such that the following application 

\[
h\sigma^d\circ Q\vert_{\mathcal{P}^d_\delta}:\mathcal{P}^d_\delta(S^{1|n})\to\mathcal{P}^d_\delta(S^{1|n})
\]is the identity.\\
We say that the fine quantization $Q$ is  $\spo(2|n)$-equivariant if it intertwines the actions of $\spo(2|n)$ on $\mathcal{P}_\delta(S^{1|n})$ and on $\mathcal{D}_{\lambda\mu}(S^{1|n})$.
\end{df}
\section{Construction of the fine $\spo(2|n)$-equivariant quantization on $S^{1|n}$}
In order to construct the fine $\spo(2|n)$-equivariant quantization we will need to adopt the tools used in \cite{CoOv12} in the even case. It is mainly to construct the equivariant application between $\mathcal{P}_\delta(S^{1|n})$ and $\mathcal{S}_\delta(S^{1|n})$.
We begin with the main results given in the following lemmas.
\begin{lem}\label{tools}
If $i\in\{1,\ldots,n\}$ then one has the commutator
\[
[X_{x^2},X_{\theta_i}]=-X_{x\theta_i}.
\]
\end{lem}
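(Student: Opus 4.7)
The plan is to reduce the bracket of vector fields to a Lagrange bracket of contact Hamiltonians and then compute that bracket directly from formula \eqref{Lagrange}.

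First I recall from the discussion preceding the lemma that the identification $X_f \leftrightarrow f$ between $\mathcal{K}(n)$ and $C^\infty(S^{1|n})$ equipped with the Lagrange bracket is an isomorphism of Lie superalgebras; in particular $[X_f, X_g] = X_{\{f,g\}}$. Hence it suffices to verify the scalar identity
\[
\{x^2, \theta_i\} = -x\theta_i.
\]

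Next I apply the definition \eqref{Lagrange} with $f = x^2$ (even, so $(-1)^{\tilde f}=1$) and $g = \theta_i$. The term $fg'$ vanishes because $\partial_x \theta_i = 0$, while $f'g = 2x\,\theta_i$. For the last term I compute $\overline{D}_j(x^2) = -\theta_j \partial_x(x^2) = -2x\theta_j$ and $\overline{D}_j(\theta_i) = \delta_{ij}$, so $\sum_j \overline{D}_j(x^2)\overline{D}_j(\theta_i) = -2x\theta_i$. Combining these gives
\[
\{x^2,\theta_i\} = 0 - 2x\theta_i - \tfrac{1}{2}(-2x\theta_i) = -2x\theta_i + x\theta_i = -x\theta_i,
\]
from which the claim follows by applying $X_{(\cdot)}$.

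There is no real obstacle here: the only subtlety worth keeping track of is the sign conventions, namely the sign $-(-1)^{\tilde f}/2$ in front of the $\overline{D}$-sum and the explicit form $\overline{D}_j = \partial_{\theta_j} - \theta_j\partial_x$ that produces the $-2x\theta_j$ needed to cancel part of the $f'g$ contribution. Since the lemma will be used repeatedly in the sequel to build the fine quantization, the proof is best written exactly in this direct computational form rather than invoking any further structural result.
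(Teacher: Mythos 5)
Your proof is correct and follows exactly the route the paper intends: the paper's own proof simply says the result follows from the Lagrange formula, and you have carried out that computation of $\{x^2,\theta_i\}=-x\theta_i$ explicitly and accurately, including the sign bookkeeping. Nothing further is needed.
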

\begin{proof}
It is clear that the result is obtained by using the Lagrange formula.
\end{proof}
In particular, the formulas obtained in the theorem \ref{ActionFormulas} becomes with $X_{x^2}$ and $X_x$ respectively as follow
\begin{equation}\label{Commutator1}
L^\Sigma_{X_{x^2}}=x^2\partial_{x^2}+2x(\delta-\zeta\partial_\zeta)+x\theta_i\overline{D}_i-x\gamma_i\partial_{\gamma_i}
\end{equation} and
\begin{equation}\label{Commutator2}
xL^\Sigma_{X_{x}}=x^2\partial_{x}+x(\delta-\zeta\partial_\zeta)+\frac{1}{2}x\theta_i\overline{D}_i-\frac{1}{2}x\gamma_i\partial_{\gamma_i}.
\end{equation}
The formulas \eqref{Commutator1} and \eqref{Commutator2} and the lemma \ref{tools} are very important in the computations.
\begin{df}\label{Delta}
We define now the following operator on the space of fine symbols $\Sigma^{k,d}(S^{1|n})$ by setting
\[
\Delta:\Sigma^{k,d}(S^{1|n})\to\Sigma^{k,d-\frac{1}{2}}(S^{1|n}): S\mapsto \sum_{i=1}^n\gamma_i\overline{D}_i\partial_\zeta S.
\]
\end{df}

In the following, we use the Einstein convention on the sum of the repeated indexes. The following commutators facilitate the computations.
\begin{lem}\label{comm1}
\begin{enumerate}
\item[(i)] The commutator of $\Delta$ and $\zeta\partial_\zeta$ is equals to $\Delta$, i.e. $[\Delta,\zeta\partial_\zeta]=\Delta$,
\item[(ii)] $[\Delta,x]=\theta_i\gamma_i\partial_\zeta$,
\item[(iii)] The operator $\Delta$ intertwines the actions of the affine superalgebra $\mathrm{Aff}(2|n)$, i.e. $[\Delta, L_{X_f}]=0,\forall X_f\in\mathrm{Aff}(2|n)$.
\item[(iv)] $[\Delta,\gamma_i]=0$.
\end{enumerate}
\end{lem}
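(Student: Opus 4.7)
The plan is to prove the four assertions roughly in the order (i), (ii), (iv), (iii), since the first three are short graded-commutator computations that feed directly into the reduction used for (iii). Throughout I will exploit three basic facts: $\Delta = \gamma_a \overline{D}_a \partial_\zeta$ is even, $\gamma_a$ and $\overline{D}_a$ lie in independent ``tensor factors'' of the symbol algebra so their graded commutator vanishes, and $\partial_\zeta$ commutes with everything not involving $\zeta$.

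For (i), the factor $\gamma_a \overline{D}_a$ involves neither $\zeta$ nor $\partial_\zeta$, so $[\Delta,\zeta\partial_\zeta] = \gamma_a\overline{D}_a[\partial_\zeta,\zeta\partial_\zeta] = \gamma_a\overline{D}_a\partial_\zeta = \Delta$ using the one-variable relation $[\partial_\zeta,\zeta\partial_\zeta]=\partial_\zeta$. For (ii), the graded Leibniz rule gives $\overline{D}_a \circ x = -\theta_a + x\overline{D}_a$, so $\Delta \circ x - x \circ \Delta = \gamma_a(-\theta_a)\partial_\zeta = \theta_a\gamma_a\partial_\zeta$ (using $\gamma_a\theta_a = -\theta_a\gamma_a$). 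For (iv), moving $\gamma_i$ through $\Delta$ produces one sign from passing the odd operator $\overline{D}_a$ and another from passing the odd generator $\gamma_a$; these cancel, so $\Delta\gamma_i = \gamma_i\Delta$, i.e. $[\Delta,\gamma_i]=0$.

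For (iii) it suffices to check $[\Delta, L_{X_f}^{\Sigma}] = 0$ on the Hamiltonian generators $f \in \{1, x, \theta_i, \theta_{j-1}\theta_j\}$ of $\mathrm{Aff}(2|n)$. The case $f=1$ is trivial since $L_{X_1}^{\Sigma} = \partial_x$. The decisive case is $f=x$: dividing \eqref{Commutator2} by $x$ writes $L_{X_x}^{\Sigma}$ as four summands, namely $x\partial_x$, $\delta-\zeta\partial_\zeta$, $\frac{1}{2}\theta_b\overline{D}_b$, and $-\frac{1}{2}\gamma_b\partial_{\gamma_b}$. Using (i), (ii), the Euler relation $[\Delta,\gamma_b\partial_{\gamma_b}] = -\Delta$, and the fundamental anticommutator $\overline{D}_a\overline{D}_b + \overline{D}_b\overline{D}_a = -2\delta_{ab}\partial_x$, I commute $\Delta$ with each piece. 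The four $\Delta$-contributions then sum to zero, while the residual terms of type $\theta_a\gamma_a\partial_x\partial_\zeta$ cancel by the anticommutativity of $\theta_a$ and $\gamma_a$. The cases $f=\theta_i$ and $f=\theta_{j-1}\theta_j$ reduce to the same bookkeeping applied to the corresponding actions from Theorem \ref{ActionFormulas}.

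The main obstacle is the $f=x$ computation (and by the same pattern its quadratic odd analog $f=\theta_{j-1}\theta_j$), where all four pieces of $L_{X_f}^{\Sigma}$ contribute simultaneously and the cancellation relies both on the graded Leibniz rule and on the $\overline{D}$-anticommutator. Every other generator is handled by one of (i), (ii), (iv) or a direct (anti)commutativity argument, so once the $x$ case is dispatched the rest of (iii) is essentially automatic.
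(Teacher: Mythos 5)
Your computations are correct and this is exactly the route the paper intends: the paper offers no argument beyond ``obtained by a simple computation,'' and your direct verification --- parts (i), (ii), (iv) by short graded commutators, then part (iii) checked on the Hamiltonian generators $1,x,\theta_i,\theta_{j-1}\theta_j$ using the formula of Theorem \ref{ActionFormulas}, the relation $\overline{D}_a\overline{D}_b+\overline{D}_b\overline{D}_a=-2\delta_{ab}\partial_x$, and the Euler relation $[\Delta,\gamma_b\partial_{\gamma_b}]=-\Delta$ --- supplies precisely the omitted details, with the $\Delta$-contributions $-\Delta+\tfrac12\Delta+\tfrac12\Delta$ and the residual $\theta_a\gamma_a\partial_x\partial_\zeta$ terms cancelling as you say. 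No gaps.
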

These results are obtained by a simple computation. We have also the following commutators.
\begin{lem}\label{comm2}
\item[(i)] $[\Delta,x^2]=2x\theta^i\gamma_i\partial_\zeta$,
\item[(ii)] $[\Delta,\theta^l\gamma_l]=0$,
\item[(iii)] $[\Delta,\theta^j\partial_{\gamma_j}]=\gamma_j\partial_{\gamma_j}\partial_\zeta-\theta^i\overline{D}_i\partial_\zeta$.
\end{lem}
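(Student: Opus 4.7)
The plan is to verify each of the three identities by a direct computation, using the definition $\Delta = \sum_{i=1}^n \gamma_i \overline{D}_i \partial_\zeta$, the graded Leibniz rule for super-derivations, and the elementary brackets already collected in Lemma \ref{comm1}. Observe first that every operator appearing in the statement has even parity: $\Delta$ is odd$\cdot$odd$\cdot$even, while $\theta^l\gamma_l$ and $\theta^j\partial_{\gamma_j}$ are each odd$\cdot$odd. Hence all three brackets are ordinary commutators, which removes one potential source of sign errors at the outermost level.

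For (i), I would reduce directly to Lemma \ref{comm1}(ii). Since the bracket $[\Delta,x]=\theta_i\gamma_i\partial_\zeta$ is even and contains neither $\partial_x$ nor $x$, it commutes with $x$, so the graded Leibniz rule gives
\[
[\Delta,x^2]=[\Delta,x]\,x+x\,[\Delta,x]=2x\,[\Delta,x]=2x\theta_i\gamma_i\partial_\zeta.
\]

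For (ii), the factor $\partial_\zeta$ commutes with every factor in $\theta^l\gamma_l$, so the claim reduces to $[\gamma_i\overline{D}_i,\theta^l\gamma_l]=0$. The only non-trivial derivation contribution comes from $\overline{D}_i$ acting on $\theta^l$, which is proportional to $\delta_{il}$ and hence yields a term proportional to $\gamma_l\gamma_l=0$; the remaining pure reordering contributions cancel in pairs after applying $\gamma_i\gamma_l=-\gamma_l\gamma_i$.

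For (iii), the most delicate computation, I would apply both sides of the commutator to a test symbol $S$. Expanding $\Delta(\theta^j\partial_{\gamma_j}S)$ with the graded Leibniz rule $\overline{D}_i(\theta^j\,\cdot)=\delta_{ij}\,\cdot\,-\theta^j\overline{D}_i(\cdot)$ produces $\gamma_j\partial_{\gamma_j}\partial_\zeta S$ plus a cross term $-\gamma_i\theta^j\overline{D}_i\partial_{\gamma_j}\partial_\zeta S$. Expanding $\theta^j\partial_{\gamma_j}(\Delta S)$ with $\partial_{\gamma_j}(\gamma_i\,\cdot)=\delta_{ij}\,\cdot\,-\gamma_i\partial_{\gamma_j}(\cdot)$ produces $\theta^i\overline{D}_i\partial_\zeta S$ plus a second cross term $-\theta^j\gamma_i\partial_{\gamma_j}\overline{D}_i\partial_\zeta S$. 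Using $\gamma_i\theta^j=-\theta^j\gamma_i$ and $\overline{D}_i\partial_{\gamma_j}=-\partial_{\gamma_j}\overline{D}_i$ (both pairs odd acting on independent variables), the two cross terms cancel exactly, and subtracting yields $\gamma_j\partial_{\gamma_j}\partial_\zeta S - \theta^i\overline{D}_i\partial_\zeta S$. The principal obstacle is the sign bookkeeping in this last step; I expect it is most safely handled by evaluating the commutator on a test symbol rather than by manipulating operator identities formally.
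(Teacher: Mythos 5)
Your proposal is correct, and it follows the same route the paper intends: part (i) is reduced to Lemma \ref{comm1}(ii) via the Leibniz rule, and parts (ii) and (iii) are the elementary direct computations (with the sign bookkeeping for the odd quantities $\gamma_i$, $\theta^j$, $\overline{D}_i$, $\partial_{\gamma_j}$ handled correctly, so the cross terms cancel as you claim). The paper merely asserts that the lemma follows from Lemma \ref{comm1}; your write-up supplies the details it omits.
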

It sufficient to use the elementary results of the lemma \ref{comm1}.
\subsection{$\spo(2|n)$-equivariant application between $\mathcal{P}_\delta(S^{1|n})$ and $\mathcal{S}_\delta(S^{1|n})$}

As done in the even case (see e.g in \cite{CoOv12}), we define in this section the application $\mathcal{SQ}$ between $\mathcal{P}_\delta(S^{1|n})$ and $\mathcal{S}_\delta(S^{1|n})$ and we show that it intertwines the actions of the Lie superalgebra $\spo(2|n)$ on the spaces $\mathcal{P}_\delta(S^{1|n})$ and $\mathcal{S}_\delta(S^{1|n})$. 
\begin{theorem}\label{main}
If $\delta$ is not contained in the set \[
I_\delta:= \bigcup_{d=0}^\infty\{\frac{2c-j}{2},\quad 0\leqslant j\leqslant 2d-1,\quad 0\leqslant c\leqslant d\}
\] then the application $\mathcal{SQ}:\mathcal{P}_\delta(S^{1|n})\to \mathcal{S}_\delta(S^{1|n})$ defined by
\[
S\mapsto\left(\sum_{a=0}^\infty{\Delta^a\frac{1}{2^aa!}\prod_{j=0}^{a-1}{\frac{1}{\left(c-\delta-\frac{1}{2}j\right)}}}\right)S
\]where $c$ is the degree of $S$ in $\zeta$, is a $\spo(2|n)$-equivariant application between $\mathcal{P}_\delta(S^{1|n})$ and $\mathcal{S}_\delta(S^{1|n})$ if we set  $\prod_{j=0}^{-1}{\frac{1}{\left(c-\delta-\frac{1}{2}j\right)}}=1$. 
\end{theorem}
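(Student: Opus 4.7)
The plan is to reduce the intertwining property $L^{\mathcal{S}}_{X_f}\circ\mathcal{SQ}=\mathcal{SQ}\circ L^{\mathcal{P}}_{X_f}$ to a recursion on the scalar coefficients of the series defining $\mathcal{SQ}$. First I would establish affine equivariance: for every $f\in\{1,x,\theta_i,\theta_{j-1}\theta_j\}$ spanning the contact Hamiltonians of $\mathrm{Aff}(2|n)$, the correction $\frac{1}{2}(-1)^{\tilde f}\overline{D}_i(f')\gamma_i\partial_\zeta$ appearing in Theorem~\ref{ActionFormulas}(iii) vanishes because $f'\in\{0,1\}$ and $\overline{D}_i$ kills constants, so $L^{\mathcal{S}}_{X_f}=L^{\Sigma}_{X_f}=L^{\mathcal{P}}_{X_f}$; Lemma~\ref{comm1}(iii) then gives $[\Delta,L_{X_f}]=0$, and since each such $L_{X_f}$ preserves the $\zeta$-grading it commutes with the scalar factor $\alpha_a(N):=\frac{1}{2^aa!}\prod_{j=0}^{a-1}(N-\delta-j/2)^{-1}$ (where $N=\zeta\partial_\zeta$). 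Hence $\mathcal{SQ}$ commutes with every affine action. By Lemma~\ref{tools}, $X_{x\theta_i}=-[X_{x^2},X_{\theta_i}]$, so by the Jacobi identity applied to $\mathcal{SQ}$ it suffices to verify equivariance with respect to the single remaining generator $X_{x^2}$.

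For $f=x^2$ one has $\overline{D}_i(2x)=-2\theta_i$, so Theorem~\ref{ActionFormulas}(iii) gives $L^{\mathcal{S}}_{X_{x^2}}-L^{\Sigma}_{X_{x^2}}=-T$ where $T:=\sum_{i=1}^n\theta_i\gamma_i\partial_\zeta$. Combined with $L^{\mathcal{P}}_{X_{x^2}}=L^{\Sigma}_{X_{x^2}}$ from Theorem~\ref{ActionFormulas}(ii), the equivariance condition becomes
\[
[L^{\Sigma}_{X_{x^2}},\mathcal{SQ}]=T\circ\mathcal{SQ}.
\]
Since $L^{\Sigma}_{X_{x^2}}$ preserves the $\zeta$-grading it commutes with $N$ and with every scalar $\alpha_a(N)$, so evaluating both sides on a $\zeta$-homogeneous vector $v_c$ reduces the identity to a term-by-term matching of the coefficients of $T\Delta^a v_c$.

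The core computation is then the commutator $[L^{\Sigma}_{X_{x^2}},\Delta]$. Using the explicit expression \eqref{Commutator1} for $L^{\Sigma}_{X_{x^2}}$ together with Lemmas~\ref{comm1} and~\ref{comm2} (notably $[\Delta,x^2]=2xT$, $[\Delta,x]=T$, $[\Delta,N]=\Delta$) and the super-anticommutation relations $\overline{D}_i\overline{D}_j+\overline{D}_j\overline{D}_i=-2\delta_{ij}\partial_x$ and $\overline{D}_i\theta_j+\theta_j\overline{D}_i=\delta_{ij}$, the summands of $L^{\Sigma}_{X_{x^2}}$ should conspire to yield the clean identity
\[
[L^{\Sigma}_{X_{x^2}},\Delta]=2T(N-\delta),
\]
along with the auxiliary fact $[\Delta,T]=0$ (which itself follows from $\gamma_i^2=0$ and the super-commutativity of $\overline{D}_i$ with $\gamma_j$). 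Leibniz propagation then gives
\[
[L^{\Sigma}_{X_{x^2}},\Delta^a]=2a\,T\,\Delta^{a-1}\bigl(N-\delta-\tfrac{a-1}{2}\bigr),
\]
and matching $\sum_{a\geq 1}\alpha_a(c)[L^{\Sigma}_{X_{x^2}},\Delta^a]v_c$ against $\sum_{a\geq 0}\alpha_a(c)T\Delta^av_c$, after the index shift $a\mapsto a+1$, produces the recursion
\[
2(a+1)(c-\delta-a/2)\,\alpha_{a+1}(c)=\alpha_a(c),\qquad \alpha_0(c)=1,
\]
whose unique solution is the product of the statement. The hypothesis $\delta\notin I_\delta$ ensures that all denominators $c-\delta-j/2$ arising for the relevant $c,j$ on $\Sigma_\delta^{k,d}$ are non-zero.

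The main obstacle is the super-algebraic derivation of $[L^{\Sigma}_{X_{x^2}},\Delta]=2T(N-\delta)$ itself. Each summand of $L^{\Sigma}_{X_{x^2}}$ contributes separately—for instance $[x^2\partial_x,\Delta]=-2xT\partial_x$ and $[2x(\delta-N),\Delta]=2x\Delta-2(\delta-N-1)T$—and the spurious contributions involving $x\Delta$, $xT\partial_x$ and $\overline{D}_i\overline{D}_j$-type terms coming from the summands $x\theta_i\overline{D}_i$ and the $\gamma_k\partial_{\gamma_j}$ pieces must cancel exactly, leaving only the clean term $2T(N-\delta)$. The careful bookkeeping of Koszul signs, particularly when commuting $\overline{D}_i$ past the odd variables $\theta_j$ and $\gamma_j$, is the delicate part of this verification.
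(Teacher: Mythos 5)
Your proposal follows essentially the same route as the paper: affine equivariance plus the bracket $[X_{x^2},X_{\theta_i}]=-X_{x\theta_i}$ to reduce everything to $X_{x^2}$, the commutator $[L^\Sigma_{X_{x^2}},\Delta^a]=2a\,\Delta^{a-1}\theta^s\gamma_s\partial_\zeta\bigl(\zeta\partial_\zeta-\delta-\tfrac{1}{2}(a-1)\bigr)$, and the resulting recursion $2a\bigl(c-\delta-\tfrac{1}{2}(a-1)\bigr)C_a=C_{a-1}$, which is exactly Lemma~\ref{deltaOper} and the conclusion of the paper's proof. The only difference is cosmetic: where you propose to verify the base case $[L^\Sigma_{X_{x^2}},\Delta]=2\theta^s\gamma_s\partial_\zeta(\zeta\partial_\zeta-\delta)$ by a direct term-by-term super-computation, the paper shortcuts it by writing $L^\Sigma_{X_{x^2}}=2xL^\Sigma_{X_x}-x^2\partial_x$ and using the $\mathrm{Aff}(2|n)$-equivariance of $\Delta$ together with $[\Delta,x]$ and $[\Delta,x^2]$ from Lemmas~\ref{comm1} and~\ref{comm2}.
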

\begin{proof}
It is clear that the application $\mathcal{SQ}$ intertwines the actions of $\mathrm{Aff}(2|n)$. Because of the commutator $[X_{x^2},X_{\theta_i}]$ given in the lemma \ref{tools}, it is clear that when we prove that $\mathcal{SQ}$ intertwines the action of $X_{x^2}$ on the spaces $\mathcal{P}_\delta(S^{1|n})$ and $ \mathcal{S}_\delta(S^{1|n})$, then we prove that the application $\mathcal{SQ}$ is $\spo(2|n)$-equivariant between $\mathcal{P}_\delta(S^{1|n})$ and  $\mathcal{S}_\delta(S^{1|n})$.
The application of the following form $\sum_{a=0}^\infty{\Delta^aC_a}$ where $C_a$ is a constant for all integer $a$, is $\spo(2|n)$-equivariant between  $\mathcal{P}_\delta(S^{1|n})$ and  $\mathcal{S}_\delta(S^{1|n})$ if and only if we have the condition
\[
L^S_{X_{x^2}}\circ\sum_{a=0}^\infty{\Delta^aC_a}=
\sum_{a=0}^\infty{\Delta^aC_a}\circ L^\Sigma_{X_{x^2}}
\]
We can also write the equation as follow
\[
(L^S_{X_{x^2}}-L^\Sigma_{X_{x^2}})\circ\sum_{a=0}^\infty{\Delta^aC_a}+ L^\Sigma_{X_{x^2}}\circ
\sum_{a=0}^\infty{\Delta^aC_a}=\sum_{a=0}^\infty{\Delta^aC_a}\circ L^\Sigma_{X_{x^2}}.
\]
By using the third formula of theorem \ref{ActionFormulas}, we can see that $L^S_{X_{x^2}}-L^\Sigma_{X_{x^2}}=-\theta^s\gamma_s\partial_{\zeta}$. Therefore, the previous equality becomes
\begin{equation}\label{demo}
\theta^s\gamma_s\partial_{\zeta}\circ\sum_{a=0}^\infty{\Delta^aC_a}+
\sum_{a=0}^\infty[L^\Sigma_{X_{x^2}},\Delta^a]C_a=0.
\end{equation}
We need to know the commutator $[L^\Sigma_{X_{x^2}},\Delta^a]$ which we compute in the following lemma.
\begin{lem}\label{deltaOper}
For all non vanishing integer $a$, we have
\[
[L^\Sigma_{X_{x^2}},\Delta^a]=2a\Delta^{a-1}\theta^s\gamma_s\partial_\zeta(\zeta\partial_\zeta-\delta-\frac{1}{2}(a-1)).
\] 
\end{lem}
\begin{proof}
We prove this by induction. We suppose that the formula is correct for $a=1$. Indeed, because of the formulas \eqref{Commutator1} and \eqref{Commutator2}, we can write \[
[\Delta,L^\Sigma_{X_{x^2}}]=[\Delta,2xL^\Sigma_{X_x}]-[\Delta,x^2\partial_x].
\]
Since the operator $\Delta$ is $\mathrm{Aff}(2|n)$-equiavriant, it sufficient to compute the commutators $[\Delta,x]$ and $[\Delta,x^2\partial_x]$ by using the elementary results of lemma\ref{comm1} and lemma \ref{comm2}. We show now that the formula is true for all $a\in\mathbb{N}$. We can write that the commutator $[L^\Sigma_{X_{x^2}},\Delta^a]$ is equals to
\[
[L^\Sigma_{X_{x^2}},\Delta^a]=[L^\Sigma_{X_{x^2}},\Delta^{a-1}]\Delta+\Delta^{a-1}[L^\Sigma_{X_{x^2}},\Delta].
\]
If we suppose true the formula given in the lemma \ref{deltaOper} for $a-1$, the expression $[L^\Sigma_{X_{x^2}},\Delta^{a-1}]\Delta+\Delta^{a-1}[L^\Sigma_{X_{x^2}},\Delta]$ in the second member is equals to
\[
2(a-1)\Delta^{a-2}\theta^s\gamma_s\partial_\zeta\left(\zeta\partial_\zeta-\delta-\frac{1}{2}(a-2)\right)\Delta+2\Delta^{a-1}\theta^s\gamma_s\partial_\zeta(\zeta\partial_\zeta-\delta).
\]
By using the fact that $[\delta,\zeta\partial_\zeta]=\Delta$ and that $[\Delta,\theta^s\gamma_s]=0$, the commutator $[L^\Sigma_{X_{x^2}},\Delta^a]$ becomes
\[
2\Delta^{a-1}\theta^s\gamma_s\partial_\zeta\left((a-1)(\zeta\partial_\zeta-\delta)
-\frac{1}{2}a(a-1)+(\zeta\partial_\zeta-\delta)\right).
\]
A simple computation gives the result.
\end{proof}
We return now to the proof of the theorem. Using the formula of lemme \ref{deltaOper}, the expression \eqref{demo} can be written as follow
\[
-\theta^s\gamma_s\partial_\zeta\circ\sum_{a=0}^\infty\Delta^aC_a+2\sum_{a=1}^\infty a\Delta^{a-1}\theta^s\gamma_s\partial_\zeta\left(\zeta\partial_\zeta-\delta-\frac{1}{2}(a-1)\right)C_a=0,
\] or also
\[
\sum_{a=1}^\infty \theta^s\gamma_s\partial_\zeta\Delta^{a-1}\left(-C_{a-1}+2a\left(\zeta\partial_\zeta-\delta-\frac{1}{2}(a-1)\right)C_a\right)=0.
\]
An application of the form $\sum_{a=0}^\infty \Delta^a C_a$ is $\spo(2|n)$-equivariant if and only if the constant $C_a$ of the theorem \ref{main} are given by the following formula
\[
C_a=\frac{C_{a-1}}{2a\left(c-\delta-\frac{1}{2}(a-1)\right)}
\] for all $a\in\N_0$.
\end{proof}
\subsection{Fine $\spo(2|n)$-equivariant quantization}

In this section, we show that in order to construct the fine $\spo(2|n)$-equivariant quantization, we need to use the $\pgl(p+1|q)$-equivariant quantization constructed by P.Mathonet and F. Radoux in \cite{MatRad11}. We begin here by their main results on $\mathbb{R}^{p|q}$ for all $p,q\in\mathbb{N}$.\\
If $p+1\neq q$, then the projective Lie superalgebra $\pgl(p+1|q)$ is isomorphic to the Lie superalgebra $\mathfrak{sl}(p+1|p)$. The set of critical values of the $\mathfrak{sl}(p+1|q)$-equivariant application between $\mathcal{S}_\delta$ and $\mathcal{D}_{\lambda\mu}$ on $\mathbb{R}^{p|q}$ is given by the formula
 \begin{equation}\label{PGL_CRIT}
\mathfrak{C}=\cup_{k=1}^\infty\mathfrak{C}_k, \quad\mbox{where}\quad \mathfrak{C}_k=\{\frac{2k-i+p-q}{p-q+1}:i=1,\cdots,k\}.  
 \end{equation} 
The following theorem gives the main result of $\mathfrak{sl}(p+1|q)$-equivariant quantization on $\mathbb{R}^{p|q}$.
\begin{theorem}\label{PR}
If $\delta$ is not critical value, then the application $Q:\mathcal{S}_{\delta}\to \D_{\lambda\mu}$ defined by 
\[  
Q(S)(f)=\sum_{r=0}^k{C_{k,r}Q_{\mathrm{Aff}}(\mathrm{div}^rS)(f)},\quad\forall S\in\mathcal{S}_{\delta}^k
\] is the unique $\mathfrak{sl}(p+1|q)$-equivariant quantization on $\R^{p|q}$ if \[
C_{k,r}=\frac{\prod_{j=1}^r((p-q+1)\lambda+k-j)}{r!\prod_{j=1}^r(p-q+2k-j-(p-q+1)\delta)},\quad\forall r\geqslant 1, \quad C_{k,0}=1.
\]
\end{theorem}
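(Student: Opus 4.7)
My plan is to reduce the $\mathfrak{sl}(p+1|q)$-equivariance to a single scalar condition per quadratic generator, exploiting that $Q_{\mathrm{Aff}}$ already intertwines the affine action. Since $\mathfrak{sl}(p+1|q)$ is generated by $\mathrm{Aff}(p+1|q)$ together with the quadratic (inversion) vector fields, it suffices to verify equivariance on the latter. Pulling back via $Q_{\mathrm{Aff}}$, the intertwining identity becomes
\[
\mathcal{L}_{X_f}\circ \tilde Q = \tilde Q\circ L^\delta_{X_f},
\qquad \tilde Q := Q_{\mathrm{Aff}}^{-1}\!\circ Q = \sum_{r=0}^{k} C_{k,r}\,\mathrm{div}^{r},
\]
where $\mathcal{L}_{X_f}$ is the twisted action on $\mathcal{S}_\delta$ transported from $\mathcal{D}_{\lambda\mu}$. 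Because $\gamma(X_f)=\mathcal{L}_{X_f}-L^\delta_{X_f}$ vanishes on $\mathrm{Aff}(p+1|q)$ and, for a quadratic $X_f$, is a constant-coefficient order-one operator that strictly lowers the degree in $\zeta$, the constraint collapses to the quadratic part alone.

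Next I would compute, by induction from $r=1$, the commutators $[L^\delta_{X_f},\mathrm{div}^r]$ and $[\gamma(X_f),\mathrm{div}^r]$ for a quadratic $X_f$; the explicit form of $\gamma(X_f)$ on $\mathbb{R}^{p|q}$ factors through $\mathrm{div}$ with a coefficient linear in $\lambda$ and in the superdimension $p-q$. Substituting the ansatz into the intertwining identity, isolating the obstruction via $\gamma$, and matching the coefficient of each independent operator $\mathrm{div}^{r-1}$ on both sides yields the two-term recursion
\[
r\bigl(p-q+2k-r-(p-q+1)\delta\bigr)\,C_{k,r} \;=\; \bigl((p-q+1)\lambda+k-r\bigr)\,C_{k,r-1},
\]
which together with $C_{k,0}=1$ has exactly the stated closed-form solution. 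The non-vanishing of every prefactor is precisely the condition $\delta\notin \mathfrak{C}_k$, so existence holds off the critical set.

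For uniqueness I would use the Casimir method announced in the introduction. If $Q_1, Q_2$ are both $\mathfrak{sl}(p+1|q)$-equivariant quantizations then $R := Q_1-Q_2$ is equivariant, vanishes on the principal symbol, and therefore strictly lowers the filtration degree. The quadratic Casimir element $C$ of $\mathfrak{sl}(p+1|q)$ acts diagonally on each $\mathcal{S}_\delta^{k}$ with an eigenvalue $\alpha_k$ computable explicitly, and acts triangularly on $\mathcal{D}_{\lambda\mu}$ with the same diagonal $\alpha_k$. The relation $C\circ R = R\circ C$, combined with the strict decrease of filtration, forces $\alpha_k = \alpha_{k-j}$ for some $j\geqslant 1$ on each nonzero homogeneous component of $R$; rewriting this identity in terms of $\delta$ yields exactly $\delta\in\mathfrak{C}$, contradicting the hypothesis. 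Hence $R=0$.

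The main obstacle, to my mind, is verifying that the recursion extracted from one quadratic generator is simultaneously consistent across all such generators: one has to check that $\tilde Q$ is invariant under the full subsuperalgebra spanned by the quadratic vector fields, not just the single generator used to derive the recursion. The superbrackets of the quadratic generators with the affine ones produce additional constraints, which should turn out to be automatically satisfied once the $\mathrm{Aff}(p+1|q)$-equivariance of $Q_{\mathrm{Aff}}$ is invoked, but the Grassmann signs make the combinatorial bookkeeping nontrivial.
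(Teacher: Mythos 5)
This statement is not proved in the paper at all: Theorem~\ref{PR} is quoted verbatim as an external input from Mathonet--Radoux \cite{MatRad11}, so there is no in-paper proof to compare your attempt against. Judged on its own terms, your plan is the standard and correct scheme, and it is internally consistent: the reduction to the quadratic generators via the grading $\mathfrak{g}_{-1}\oplus\mathfrak{g}_0\oplus\mathfrak{g}_1$ of $\mathfrak{sl}(p+1|q)$ is legitimate, your two-term recursion does solve to the stated closed form for $C_{k,r}$, and its denominators vanish exactly on $\mathfrak{C}_k$. Note, however, that your route differs from the one in \cite{MatRad11}: there, \emph{existence} is also established by the Casimir/diagonalization method (the present paper's abstract says as much), whereas your direct recursion on the coefficients of $\mathrm{div}^r$ is closer to the original even-case argument of Lecomte--Ovsienko \cite{LO}; the two are equivalent, and yours has the advantage of producing the formula for $C_{k,r}$ directly rather than a posteriori. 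Two small corrections to your sketch: for a quadratic field $X^h$ the operator $\gamma(X^h)$ does not ``factor through $\mathrm{div}$'' --- it equals $-((p-q+1)\lambda+k-1)\,i(h)$ on $\mathcal{S}^k_\delta$, an interior product of order zero with constant coefficients (it is the commutators $[L^\delta_{X^h},\mathrm{div}^r]$ that reproduce $i(h)\,\mathrm{div}^{r-1}$ terms, which is what makes the coefficient matching close up); and the ``main obstacle'' you flag is not one, since $\mathfrak{g}_1$ is abelian and an irreducible $\mathfrak{g}_0$-module, so the computation carried out for an arbitrary covector $h$ already covers all quadratic generators simultaneously, with no extra consistency conditions. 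What remains genuinely unexecuted in your proposal is the inductive computation of $[L^\delta_{X^h},\mathrm{div}^r]$ with the correct super-signs; that is where the actual content of the proof lives, and it is done in \cite{MatRad11}.
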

Here the application "$\mathrm{div}$" was extended to the symbols of degree $k$ as follow:
\[
\mathrm{div}:\mathcal{S}^k_{\delta}\to\mathcal{S}^{k-1}_{\delta}:S\mapsto \sum_{j=1}^{p+q}{(-1)^{\tilde{y_j}}i(\epsilon^j)\partial_{y^j}S}, 
 \] where $\epsilon^j$ is the $j$-th line vector of the base of $(\R^{p|q})^* \cong\mathfrak{g}_1$ and where $S$ is considered as an element of $\mathcal{F}_\delta\otimes\mathrm{Pol}^k(T^*\R^{p|q})$ and $p=2l+1$ for all $l\in\mathbb{N}$.\\
 
When the superspace $\R^{p|q}$ is endowed with the standard contact structure, i.e. when $p=2l+1$ for all $l\in\mathbb{N}$, the $\mathfrak{sl}(p+1|q)$-equivariant quantization induces a $\spo(p+1|q)$-equivariant quantization.
The set of critical values given by the formula \eqref{PGL_CRIT} of the induced  $\spo(p+1|q)$-equivariant quantization becomes
\begin{equation}\label{CRITIQUE}
\mathfrak{C}'=\cup_{k=1}^\infty\mathfrak{C}'_k, \quad\mbox{where}\quad \mathfrak{C}'_k=\{\frac{2k-i+m}{2}:i=1,\cdots,k\}  
 \end{equation} where $m=p-q$ is the superdimension
and the theorem \ref{PR} takes the following form
\begin{theorem}
If $\delta$ is not critical value, then the application $Q:\mathcal{S}_{\delta}\to \D_{\lambda\mu}$ defined by 
\[  
Q^{\mathfrak{sl}}(S)(f)=\sum_{r=0}^k{C_{k,r}Q_{\mathrm{Aff}}(\mathrm{div}^rS)(f)},\quad\forall S\in\mathcal{S}_{\delta}^k
\] is the unique $\mathfrak{spo}(p+1|q)$-equivariant quantization on $\R^{p|q}$ if \[
C_{k,r}=\frac{\prod_{j=1}^r(2\lambda_c+k-j)}{r!\prod_{j=1}^r(m+2k-j-2\delta_c)},\quad\forall r\geqslant 1, \quad C_{k,0}=1
\] where the index $c$ denotes the contact weight.
\end{theorem}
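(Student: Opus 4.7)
The plan is to derive this as a corollary of Theorem \ref{PR} via the two natural reductions available in the contact setting. Since $\spo(p+1|q)$ is defined as the intersection $\mathcal{K}(q)\cap\pgl(p+1|q)$ and $\pgl(p+1|q)\cong\mathfrak{sl}(p+1|q)$ under the hypothesis $p+1\neq q$, the inclusion $\spo(p+1|q)\subset\mathfrak{sl}(p+1|q)$ is immediate; any $\mathfrak{sl}(p+1|q)$-equivariant map is therefore \emph{a fortiori} $\spo(p+1|q)$-equivariant, which already gives existence at the level of the tensor-density modules $\mathrm{Ber}_\lambda$ used in Theorem \ref{PR}.

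Next, I would transfer this quantization to the contact-density modules $\mathcal{F}_{\lambda_c}$ via the $\mathcal{K}(q)$-module isomorphism $\varphi:\mathcal{F}_{\lambda_c}\to\mathrm{Ber}_{2\lambda_c/(m+1)}$ of equation \eqref{IntertwAction}. Under this identification, $\lambda=2\lambda_c/(m+1)$ and $\delta=2\delta_c/(m+1)$, so that $(p-q+1)\lambda=2\lambda_c$ and $(p-q+1)\delta=2\delta_c$. Substituting these relations directly into the explicit constants of Theorem \ref{PR} produces the asserted $C_{k,r}$, and the critical condition $(p-q+1)\delta=2k-i+p-q$ becomes $\delta_c=(2k-i+m)/2$, which is exactly the set $\mathfrak{C}'_k$ of equation \eqref{CRITIQUE}. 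Compatibility with the affine quantization map $Q_{\mathrm{Aff}}$ and with $\mathrm{div}$ is preserved under $\varphi$ because $\varphi$ is the identity at the level of superfunctions, only reweighting the intertwining density factor.

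The subtle point will be uniqueness. Existence and the explicit formula follow formally from the translation above, but uniqueness as a $\spo(p+1|q)$-equivariant quantization does \emph{not} follow from the uniqueness of the $\mathfrak{sl}(p+1|q)$-quantization in Theorem \ref{PR}, since $\spo$-equivariance imposes strictly fewer constraints and could \emph{a priori} admit more solutions. I would close this gap by adapting the Casimir operator method advertised in the introduction: any two $\spo(p+1|q)$-equivariant quantizations differ by a $\spo(p+1|q)$-intertwiner $T:\mathcal{S}_\delta\to\mathcal{D}_{\lambda\mu}$ of strictly negative filtration degree, and by composing with $\sigma$ one is reduced to an intertwiner of $\mathcal{S}_\delta$ into itself. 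Because the Casimir $\cC$ of $\spo(p+1|q)$ acts as a scalar depending polynomially on the homogeneity degree $k$ and on $\delta_c$ on each component $\mathcal{S}_\delta^k$, and $T$ commutes with $\cC$, a comparison of Casimir eigenvalues between $\mathcal{S}_\delta^k$ and $\mathcal{S}_\delta^{k-r}$ forces $T$ to vanish unless $\delta_c\in\mathfrak{C}'$. The main obstacle will be checking that these Casimir eigenvalues are genuinely pairwise distinct outside the critical set and that their difference factors precisely as $\prod_{j=1}^{r}(m+2k-j-2\delta_c)$, so that the zero locus of the obstruction matches $\mathfrak{C}'$ — a finite computation parallel to the one carried out in \cite{MatRad11} for $\pgl(p+1|q)$.
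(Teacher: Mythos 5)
Your existence argument is sound and is in fact all the paper itself offers: since $\spo(p+1|q)=\mathcal{K}(q)\cap\pgl(p+1|q)\subset\mathfrak{sl}(p+1|q)$, the map of Theorem~\ref{PR} is automatically $\spo(p+1|q)$-equivariant, and transporting it through the isomorphism \eqref{IntertwAction} with $\lambda=2\lambda_c/(m+1)$, $\delta=2\delta_c/(m+1)$ (so that $(p-q+1)\lambda=2\lambda_c$ and $(p-q+1)\delta=2\delta_c$) turns the constants of Theorem~\ref{PR} into the stated $C_{k,r}$ and the set \eqref{PGL_CRIT} into \eqref{CRITIQUE}. That is exactly the paper's (tacit) derivation, and you are also right to flag that the word ``unique'' does not come for free: equivariance under the smaller algebra $\spo(p+1|q)$ admits a priori more intertwiners, so uniqueness is a genuinely stronger claim than in Theorem~\ref{PR}. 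The paper gives no argument for it at all.

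The gap is in your proposed repair of that point. Your Casimir argument rests on the assertion that the Casimir of $\spo(p+1|q)$ acts as a scalar on each homogeneous component $\mathcal{S}^k_\delta$, depending only on $k$ and $\delta_c$. The paper's own later computation contradicts this: by \eqref{Normsymbolcas}, $C^S=C^\Sigma+\tfrac{1}{2}\Delta$, where $C^\Sigma$ is scalar only on the \emph{fine} pieces $\Sigma^{k,d}_\delta$ with eigenvalue $\alpha_{k,d}$ given by \eqref{FUNCTION_VALUE}, which depends on $d$ as well as $k$; a coarse component $\mathcal{S}^k_\delta$ decomposes into several such pieces ($\lceil k/2\rceil\leqslant d\leqslant k$, say), on which $C^S$ is block-triangular with distinct diagonal eigenvalues, not scalar. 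Consequently, comparing eigenvalues between the source and target of a degree-lowering $\spo$-intertwiner produces the conditions $\alpha_{k,d}\neq\alpha_{k',d'}$ of Definition~\ref{Crit-delta}, i.e.\ the set $C_{\delta,\mathrm{crit}}$ of Section~4, and there is no reason for this zero locus to collapse to $\mathfrak{C}'$ or for the eigenvalue differences to factor as $\prod_{j=1}^{r}(m+2k-j-2\delta_c)$. So your final step fails as written; either the uniqueness clause must be proved by excluding the larger critical set $C_{\delta,\mathrm{crit}}$ (as the paper does for the fine quantization), or the theorem should be read as asserting only that the $\mathfrak{sl}(p+1|q)$-quantization \emph{induces} an $\spo(p+1|q)$-equivariant one, with uniqueness holding merely within the class of $\mathfrak{sl}(p+1|q)$-equivariant maps.
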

In particular, when $p=1$ and $q=n$ we obtain the situation of the super circles $S^{1|n}$ by identifying the super circles $S^{1|n}$ with the super space $\R^{1|n}$.
We are now in position to construct the fine $\spo(2|n)$-equivariant quantization on $S^{1|n}$ by composing the applications $\mathcal{SQ}$ and $Q^{\mathfrak{sl}}$ in the following theorem.

\begin{theorem}\label{Existence}
If  $\delta\notin I_\delta\cup\mathsf{C}$ where $\mathsf{C}=\bigcup_{k\in\N}\mathfrak{C}'_k$ and if $Q^{\mathfrak{sl}}$ is the $\pgl(2|n)$-equivariant quantization, then the application $Q=Q^{\mathfrak{sl}}\circ\mathcal{SQ}$ is a fine $\spo(2|n)$-equivariant quantization on 
\[
\mathcal{P}_\delta(S^{1|n})=\bigoplus_{d\in\frac{1}{2}\N}\mathcal{P}^d_\delta(S^{1|n}).
\]
\end{theorem}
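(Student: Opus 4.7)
My approach is to combine the two main equivariance results established earlier in the paper. By Theorem \ref{main}, since $\delta \notin I_\delta$, the map $\mathcal{SQ} : \mathcal{P}_\delta(S^{1|n}) \to \mathcal{S}_\delta(S^{1|n})$ intertwines the $\spo(2|n)$-actions $L^{\Sigma}$ and $L^{\mathcal{S}}$. By the Mathonet--Radoux quantization (recalled just above) specialised to $S^{1|n}\simeq\R^{1|n}$, since $\delta \notin \mathsf{C}=\bigcup_k\mathfrak{C}'_k$, the map $Q^{\mathfrak{sl}} : \mathcal{S}_\delta(S^{1|n}) \to \mathcal{D}_{\lambda\mu}(S^{1|n})$ is an $\spo(2|n)$-equivariant quantization in the classical sense. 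The composition $Q=Q^{\mathfrak{sl}}\circ \mathcal{SQ}$ is therefore automatically $\spo(2|n)$-equivariant by functoriality.

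Next I would verify that $Q$ is a linear bijection. The operator $\Delta=\sum_i\gamma_i\overline{D}_i\partial_\zeta$ of Definition \ref{Delta} preserves the classical total degree $k$ in $(\zeta,\gamma)$ while strictly lowering the Heisenberg degree by $\frac{1}{2}$. Consequently, on each piece $\Sigma^{k,d}_\delta(S^{1|n})$ the expansion $\mathcal{SQ}=\Id+\sum_{a\geqslant 1}C_a\Delta^a$ is a finite upper-triangular perturbation of the identity with respect to the Heisenberg grading (the sum truncates because $\Delta^a$ annihilates polynomials of $\zeta$-degree less than $a$). Hence $\mathcal{SQ}$ is bijective, and together with the bijectivity of $Q^{\mathfrak{sl}}$ this yields bijectivity of $Q$.

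The remaining and more delicate step---what I expect to be the main obstacle---is the fineness condition $h\sigma^d\circ Q\vert_{\mathcal{P}^d_\delta}=\Id$. Fix a representative $S\in\Sigma^{k,d}_\delta\subset\mathcal{P}^d_\delta$. Under the isomorphism $\varphi$ of \eqref{ISOMOR_POLYN02}, each term $C_a\Delta^aS$ is a polynomial of classical degree $k$ and Heisenberg degree $d-a/2$, so $\mathcal{SQ}(S)=S+T$ with $T\in\mathcal{S}^k_\delta$ of strictly smaller Heisenberg degree. Applying $Q^{\mathfrak{sl}}$ gives
\[
Q(S)=Q_{\mathrm{Aff}}(S)+\sum_{r\geqslant 1}C_{k,r}Q_{\mathrm{Aff}}(\mathrm{div}^r S)+Q^{\mathfrak{sl}}(T).
\]
The technical crux is to show that $Q_{\mathrm{Aff}}$ sends a fine symbol of bi-degree $(k,d)$ into $\mathcal{D}^{k,d}_{\lambda\mu}(S^{1|n})$, and that the two correction sums above lie in the strictly smaller Heisenberg level $\mathcal{H}^{d-\frac{1}{2}}_{\lambda\mu}(S^{1|n})$: the operator $\mathrm{div}$ inherited from \cite{MatRad11} strictly lowers the classical degree (and hence the Heisenberg degree by at least $\frac{1}{2}$), while $\Delta$ itself strictly lowers the Heisenberg degree on $T$. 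Both claims follow by rewriting $Q_{\mathrm{Aff}}$ in the Darboux-like normal form \eqref{NORMALOPDIFF} and counting powers of $\partial_x$ versus $\overline{D}_i$. Granted these degree-tracking statements, the Heisenberg principal symbol $h\sigma^d(Q(S))$ equals $S$ on each bi-graded piece, which completes the verification that $Q$ is a fine $\spo(2|n)$-equivariant quantization.
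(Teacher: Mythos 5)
Your proposal follows essentially the same route as the paper: the hypotheses $\delta\notin I_\delta$ and $\delta\notin\mathsf{C}$ guarantee respectively that $\mathcal{SQ}$ (Theorem \ref{main}) and the induced $\spo(2|n)$-equivariant $Q^{\mathfrak{sl}}$ are defined and equivariant, and the composite $Q=Q^{\mathfrak{sl}}\circ\mathcal{SQ}$ inherits equivariance. In fact your write-up is more careful than the paper's own proof, which stops at the existence of the two factors and does not explicitly check bijectivity or the normalization $h\sigma^d\circ Q\vert_{\mathcal{P}^d_\delta}=\mathrm{Id}$; your degree-tracking argument (that $Q_{\mathrm{Aff}}$ sends $\zeta^c\gamma^K$ to $\partial_x^c\overline{D}^K$, preserving the bidegree $(k,d)$, while $\Delta$ and $\mathrm{div}$ strictly lower the Heisenberg order) is precisely the right way to fill in those omitted verifications.
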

 \begin{proof}
  The sufficient condition of existence of $Q$ on $\mathcal{P}^d_\delta(S^{1|n})$ is simply the existence of $\mathcal{SQ}$ on $\mathcal{P}^d_\delta(S^{1|n})$ and the existence of $Q^{\mathfrak{sl}}$ on $\mathcal{SQ}(\mathcal{P}^d_\delta(S^{1|n}))$. The existence of $\mathcal{SQ}$ on $\mathcal{P}^d_\delta(S^{1|n})$ is assured by the fact that
  \[
  \delta\notin \{\frac{2c-j}{2},\quad 0\leqslant j\leqslant 2d-1,\quad 0\leqslant c\leqslant d\}
  \]
  whereas the existence of $Q^{\mathfrak{sl}}$ on $\mathcal{SQ}(\mathcal{P}^d_\delta(S^{1|n}))$ is assured by the fact that $\delta\notin \bigcup_{k=\lceil d \rceil}^{2d}\mathfrak{C}'_k$. Finally, the existence of $Q$ on $\mathcal{P}_\delta(S^{1|n})$ is assured by the fact that $\delta\notin I_\delta\cup\mathsf{C}$.
 \end{proof}
\section{Uniqueness of the fine $\spo(2|n)$-equivariant quantization}
The uniqueness of the fine $\spo(2|n)$-equivariant quantization is treated by comparing the Casimir operator $\mathcal{C}^\mathcal{P}$ on the fine symbols and the Casimir operator $C^\mathcal{D}$ on the differential operators.
\subsection{Casimir operators}

Let us recall the definition of the Casimir operator associated with a representation of a Lie superalgebra (see e.g \cite{MelNibRad13,MatRad11,Nib14,KacAdvances}).

\begin{df}\label{Casimir}
We consider a Lie superalgebra $\mathfrak{g}$ endowed with an even non-degenerate supersymmetric bilinear form $K$ and a representation $(V,\rho)$ of $\mathfrak{g}$. The Casimir operator $C_\rho$ of $\mathfrak{g}$ associated with $(V,\rho)$ is defined by 
\[
C_\rho=\sum_i\rho(u_i^*)\rho(u_i)
\] where $u_i$ and $u_i^*$ are $K$-dual bases of $\mathfrak{g}$, in the sense that $K(u_i,u_j^*)=\delta_{ij}$ for all $i,j$.
\end{df}
In the sequel, the bilinear form that we will use to define the Casimir operators of $\spo(2|n)$ will be the form $K$ defined in this way:
\begin{equation}\label{Form}
K(A,B)=2str(AB),\quad\forall A,B\in\spo(2|n).
\end{equation}

The following lemma will be important when we will compute the Casimir operator of $\spo(2|n)$.
\begin{lem}\label{dual}
The $K$-dual basis corresponding to the basis 
 \[
 X_x,\quad X_{x^2},\quad X_1,
 \quad X_{\theta_i},\quad X_{x\theta_i},\quad
  X_{\theta_{i-1}\theta_{j-1}}
 \]
 of $\mathfrak{spo}(2|n)$ is given by 
 \[
 X_x,\quad -\frac{1}{2}X_1,\quad -\frac{1}{2} X_{x^2},\quad -X_{x\theta_i},\quad X_{\theta_i},\quad X_{\theta_{i-1}\theta_{j-1}}.
 \]
\end{lem}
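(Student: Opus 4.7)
The plan is to exploit the matrix realization of $\spo(2|n) \subset \mathfrak{gl}(2|n)$ referenced in the paper, under which each generator $X_f$ of $\spo(2|n)$ becomes an explicit matrix and $K(A,B) = 2\,\mathrm{str}(AB)$ reduces to a concrete bilinear pairing. First I would write down the matrix images: the even generators $X_x, X_{x^2}, X_1$ lie in the $2\times 2$ block (the copy of $\mathfrak{sp}(2)\cong\mathfrak{sl}(2)$), the even generators $X_{\theta_{i-1}\theta_{j-1}}$ lie in the $n\times n$ block (the copy of $\mathfrak{o}(n)$), and the odd generators $X_{\theta_i}, X_{x\theta_i}$ sit in the off-diagonal $2\times n$ and $n\times 2$ blocks respectively.

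Next I would use this block decomposition to kill most of the pairings for free. If $A$ and $B$ live in blocks whose product has vanishing diagonal in both the even and odd part, then $\mathrm{str}(AB)=0$; in particular even generators from the $\mathfrak{sl}(2)$ block are $K$-orthogonal to both the $\mathfrak{o}(n)$ block and to all odd generators, and similarly for the $\mathfrak{o}(n)$ block. This reduces the verification to three manageable families: the internal pairings in $\mathfrak{sl}(2)$, the internal pairings in $\mathfrak{o}(n)$, and the mixed pairings of odd generators $X_{\theta_i}$ with $X_{x\theta_j}$.

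Then I would compute the surviving pairings directly. In the $\mathfrak{sl}(2)$ block $X_x$ corresponds to a diagonal matrix whose supertrace square produces $K(X_x,X_x)=1$, while $X_{x^2}$ and $X_1$ correspond to mutually transposed nilpotent matrices whose product generates the factor $-\tfrac12$, forcing the dual of $X_{x^2}$ to be $-\tfrac12 X_1$ and dually for $X_1$. In the $\mathfrak{o}(n)$ block the antisymmetric matrices $X_{\theta_{i-1}\theta_{j-1}}$ are self-dual. For the odd pairings, the supertrace identity $\mathrm{str}(AB) = -(-1)^{\tilde A\tilde B}\mathrm{str}(BA)$ on odd $A,B$ produces antisymmetry of $K$ restricted to odd elements, and an explicit evaluation of $\mathrm{str}(X_{\theta_i}X_{x\theta_j})$ yields the coefficients $-1$ and $+1$ in the stated dual basis.

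The main obstacle will be bookkeeping of signs and normalizations rather than any conceptual difficulty: the factor $2$ in $K(A,B)=2\,\mathrm{str}(AB)$ must match the $\tfrac12$ coming from the formula $X_f = f\partial_x - (-1)^{\tilde f}\tfrac12\sum_i \overline{D}_i(f)\overline{D}_i$ that defines each contact vector field, and the super-antisymmetry of $K$ on the odd part is what forces the asymmetric pattern $X_{\theta_i}\leftrightarrow -X_{x\theta_i}$, $X_{x\theta_i}\leftrightarrow X_{\theta_i}$ rather than a diagonal self-duality. Once these signs are handled carefully for one representative index $i$, the result for all $i$ follows by symmetry of the embedding in the odd coordinates.
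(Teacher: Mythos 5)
Your proposal is correct and follows essentially the same route as the paper, whose entire proof consists of invoking the matrix realization of $\spo(2|n)$ from the cited references and computing the pairings $2\,\mathrm{str}(AB)$ block by block; your plan simply makes that computation explicit. One small slip worth fixing: the supertrace identity is $\mathrm{str}(AB)=(-1)^{\tilde A\tilde B}\mathrm{str}(BA)$ (no extra overall minus sign), which for odd $A,B$ already yields the antisymmetry you use to force the pattern $X_{\theta_i}\mapsto -X_{x\theta_i}$, $X_{x\theta_i}\mapsto X_{\theta_i}$.
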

\begin{proof}
We use the correspondence between vector fields of $\spo(2|n)$ and the matrices established in \cite{MatRad11,Nib15,Nib14}.
\end{proof}
\subsection{Casimir operators on $\mathcal{S}_\delta$ and $\Sigma_\delta$}
The following result is proven by computation.
\begin{prop}
If $m=2l+1-n$ is the superdimension then 
\begin{enumerate}
\item the Casimir operator associated with the representation $(\mathcal{P}_\delta,L^\mathcal{P})$ is given by:
\begin{equation}\label{Finsymbolcas}
C^\mathcal{P}\vert_{\Sigma_\delta^{k,d}}=\alpha_{k,d} \Id
\end{equation}
 where $\alpha_{k,d}$  is a constant which depending with the degrees $k$ and $d$ of differential operators and it equals to:
\begin{multline}\label{FUNCTION_VALUE}
 \alpha_{k,d}=k^2+2(d^2-kd)+(k-d)(m-2\delta)\\
+\frac{1}{2}(2d-k)(m+1-4\delta)+\frac{1}{2}\delta(2\delta-m-1).
\end{multline}
\item the Casimir operator associated with the representation $(\mathcal{S}^k_\delta,L^S)$ is given by:
\begin{equation}\label{Normsymbolcas}
C^S=C^\Sigma+\frac{1}{2}\Delta,
\end{equation}
 where $\Delta$ is defined in definition \ref{Delta}.
\end{enumerate}
\end{prop}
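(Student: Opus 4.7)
The plan is to compute both Casimir operators directly from Definition \ref{Casimir}, using the basis of $\spo(2|n)$ and its $K$-dual from Lemma \ref{dual} together with the explicit formulas of Theorem \ref{ActionFormulas}. Concretely I would first write
\[
C^{\mathcal{P}} \;=\; L^{\Sigma}_{X_x}L^{\Sigma}_{X_x} - \tfrac{1}{2}L^{\Sigma}_{X_1}L^{\Sigma}_{X_{x^2}} - \tfrac{1}{2}L^{\Sigma}_{X_{x^2}}L^{\Sigma}_{X_1} - \sum_i L^{\Sigma}_{X_{x\theta_i}}L^{\Sigma}_{X_{\theta_i}} + \sum_i L^{\Sigma}_{X_{\theta_i}}L^{\Sigma}_{X_{x\theta_i}} + \sum_{i,j} L^{\Sigma}_{X_{\theta_{i-1}\theta_{j-1}}}L^{\Sigma}_{X_{\theta_{i-1}\theta_{j-1}}},
\]
and similarly for $C^{\mathcal{S}}$ with $L^{\Sigma}$ replaced by $L^{\mathcal{S}}$, and then substitute the closed form of each $L^{\Sigma}_{X_f}$ from part $(i)$ of Theorem \ref{ActionFormulas}.

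For part $(i)$, the conceptual observation is that $C^{\mathcal{P}}$ commutes with the whole $\spo(2|n)$-action. In particular it commutes with the translations $X_1$ and $X_{\theta_i}$, so its full expansion must be a constant-coefficient operator in $(x,\theta)$; and it commutes with the grading generators attached to $X_x$ and $X_{x\theta_i}$, so it preserves the bigrading $(k,d)$. These invariances force $C^{\mathcal{P}}|_{\Sigma^{k,d}}$ to be a polynomial in the Euler operators $\zeta\partial_\zeta$ and $\sum_i\gamma_i\partial_{\gamma_i}$, which act on $\Sigma^{k,d}_\delta$ as the scalars $2d-k$ and $2(k-d)$ respectively. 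To identify the scalar I would evaluate $C^{\mathcal{P}}$ on a monomial $\alpha^{\delta}\zeta^{2d-k}\gamma^K$ with $|K|=2(k-d)$ and collect contributions: the $(X_x,X_x)$-term yields the quadratic $\delta$-dependence, the pair $(X_{x^2},-\tfrac{1}{2}X_1)$ and its symmetric partner produce the cross terms in the weight and the degree operators, while the sums over $i$ from the odd pairs and from the $X_{\theta_{i-1}\theta_{j-1}}$-block produce the superdimension $m=2l+1-n$ through the contractions of $\overline{D}_i$'s. Rearranging should give exactly \eqref{FUNCTION_VALUE}.

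For part $(ii)$, I would exploit the decomposition $L^{\mathcal{S}}_{X_f}=L^{\Sigma}_{X_f}+R_{X_f}$ with $R_{X_f}:=\tfrac{1}{2}(-1)^{\tilde{f}}\sum_i\overline{D}_i(f')\gamma_i\partial_\zeta$ read off from $(iii)$ of Theorem \ref{ActionFormulas}. A direct check on each basis element gives $R_{X_f}=0$ except for $R_{X_{x^2}}=-\sum_i\theta_i\gamma_i\partial_\zeta$ and $R_{X_{x\theta_i}}=-\tfrac{1}{2}\gamma_i\partial_\zeta$. Expanding
\[
C^{\mathcal{S}}-C^{\Sigma} \;=\; \sum_a\bigl(L^{\Sigma}_{u_a^*}R_{u_a} + R_{u_a^*}L^{\Sigma}_{u_a} + R_{u_a^*}R_{u_a}\bigr)
\]
and inspecting the six pairs of Lemma \ref{dual}, one sees that in every pair at most one of $R_{u_a},R_{u_a^*}$ is nonzero, so all $R\!\cdot\! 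R$ contributions vanish; only the pairs involving $X_{x^2},X_1$ and $X_{x\theta_i},X_{\theta_i}$ survive. Substituting $L^{\Sigma}_{X_1}=\partial_x$ and $L^{\Sigma}_{X_{\theta_i}}=\theta_i\partial_x+\tfrac{1}{2}\overline{D}_i$, the $\theta_i\gamma_i\partial_x\partial_\zeta$-pieces cancel by the super-anticommutation of $\gamma_i$ with $\theta_i$, and the remaining contribution collapses to $\tfrac{1}{4}\sum_i\bigl(\gamma_i\overline{D}_i-\overline{D}_i\gamma_i\bigr)\partial_\zeta=\tfrac{1}{2}\sum_i\gamma_i\overline{D}_i\partial_\zeta=\tfrac{1}{2}\Delta$.

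The main obstacle will be the bookkeeping in part $(i)$: extracting the superdimension factor $m$ correctly requires evaluating the trace-like sums over $i$ from the odd pairs $(X_{\theta_i},-X_{x\theta_i})$, $(X_{x\theta_i},X_{\theta_i})$ and the $\theta$-$\theta$ block with the anticommutation of the $\overline{D}_i$'s against the $\gamma_i$'s pinned down, while simultaneously keeping track of the signs $(-1)^{\tilde{f}}$ coming from odd $f$'s in the formula of Theorem \ref{ActionFormulas}. Once the sign conventions are fixed consistently, the scalar $\alpha_{k,d}$ in \eqref{FUNCTION_VALUE} and the identity $C^{\mathcal{S}}=C^{\Sigma}+\tfrac{1}{2}\Delta$ both drop out of the algebra directly.
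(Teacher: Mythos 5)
Your proposal is correct and follows essentially the same route as the paper: expand the Casimir operator in the $K$-dual basis of Lemma \ref{dual}, use equivariance under the translations and grading elements to reduce $C^{\mathcal{P}}\vert_{\Sigma^{k,d}_\delta}$ to a polynomial in the Euler operators $\zeta\partial_\zeta$ and $\sum_i\gamma_i\partial_{\gamma_i}$ (acting as $2d-k$ and $2(k-d)$), and for part (ii) isolate the correction $R_{X_f}=L^{\mathcal S}_{X_f}-L^{\Sigma}_{X_f}$, which is nonzero only for $X_{x^2}$ and $X_{x\theta_i}$. Your explicit cancellation of the $\theta_i\gamma_i\partial_\zeta\partial_x$ terms and the collapse to $\tfrac12\Delta$ is in fact more detailed than the paper's one-line remark that the proof of \eqref{Normsymbolcas} is ``similar.''
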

\begin{proof}
First, we use the definition \ref{Casimir} and the formula \eqref{Form} and lemma\ref{dual} and we can write the casimir operator $C^\Sigma$ on fine symbols as follow
\begin{multline*}
-\frac{1}{2}L^\Sigma_{X_{x^2}}L^\Sigma_{X_1}-\sum_{i=1}^n{L^\Sigma_{X_{x\theta_i}}
L^\Sigma_{X_{\theta_i}}}+(L^\Sigma_{X_x})^2+\sum_{i,j}^n\sum_{i<j}{(L^\Sigma_{X_{\theta_{i-1}\theta_{j-1}}})^2}\\
+\sum_{j=1}^n{L^\Sigma_{X_{\theta_j}}L^\Sigma_{X_{x\theta_j}}}
-\frac{1}{2}L^\Sigma_{X_1}L^\Sigma_{X_{x^2}}.
\end{multline*}

 Secondly, we use the formula of $L_{X_f}^\Sigma$  given in the theorem\ref{ActionFormulas}. The fact that $C^\Sigma$ is seen as a differential operator written in $\partial_x$, $\partial_\zeta$ and in $\overline{D}_i$ and the fact that $C^\Sigma$ commutes with $\partial_x$ and that $\partial_x$ commutes with $\overline{D}_i$, the coefficients of the operator $C^\Sigma$ don't depend on the variable $x$. In the same sense, the operator $C^\Sigma$ commutes with $X_{\theta_i}$ and the coefficients of the operator $C^\Sigma$ don't depend on the variable $\theta_i$.\\
Because of the explicit formulas of the terms
 $L^\Sigma_{X_{x^2}}$ and $\sum_i{L^\Sigma_{X_{x\theta_i}}}$, the contributions of the terms $-\frac{1}{2}L^\Sigma_{X_{x^2}}L^\Sigma_{X_1}$ and $\sum_{i=1}^n{L^\Sigma_{X_{x\theta_i}}L^\Sigma_{X_{\theta_i}}}$ are equal to zero. The contribution of the term $-\frac{1}{2}L^\Sigma_{X_1}L^\Sigma_{X_{x^2}}$ is the same that its given by \[-\frac{1}{2}L^\Sigma_{[X_1,X_{x^2}]}=-\frac{1}{2}L^\Sigma_{X_{\{1,x^2\}}}=-L^\Sigma_{X_x}=-\left(
\delta-\zeta\partial_\zeta-\frac{1}{2}\sum_{k}{\gamma_k\partial_{\gamma_k}}  \right).\]
The contributions of the other terms of $C^\Sigma$ are obtained by computation. The sum of the all contributions of the Casimir operator $C^\Sigma$ is polynomial in Euler operators corresponding in $\zeta$ and $\gamma_i$ coordinates. It is therefore easy to see that the restriction of $C^\mathcal{P}$ to $\Sigma^{k,d}$ is equal to $\alpha_{k,d}\mathrm{Id}$. The proof of the formula \eqref{Normsymbolcas} is similar. We use the formula of $L_{X_f}^S$ given in the theorem\eqref{ActionFormulas}.
\end{proof}

\subsection{Casimir operator on $\mathcal{D}_{\lambda\mu}$}
In order to compute the Casimir operator on  $\mathcal{D}_{\lambda\mu}$ we use the tools developed by P. Mathonet and F. Radoux in \cite{MatRad11}. These tools which are axed to a interior product of a row vector and a symmetric tensor. This allow us to adapt their results to our case. We recall first this definition given in \cite{MatRad11}. 
\begin{df}\label{Int}
The interior product of $h\in(\R^{2l+1|n})^*$ in a symmetric tensor $h_1\vee\ldots\vee h_k$ $(h_1,\ldots, h_k\in\R^{2l+1|n})$ is defined by 
 \[i(h)h_1\vee\ldots\vee h_k=\sum_{j=1}^k{(-1)^{\tilde{h}(\sum_{r=1}^{j-1}{\tilde{h_r})}}\langle h,h_j\rangle h_1\vee\ldots \hat{j}\ldots\vee h_k},\] where $\langle h,x\rangle$ denotes the standard matrix multiplication of the row $h$ by the column $x$.
\end{df}
This means that we extend the pairing of $(\R^{2l+1|n})^*$ and $\R^{2l+1|n}$ as a derivation of the symmetric product. We may also extend it to $\mathcal{S}_\delta$ by setting $i(h)u=0$ for $u\in\mathcal{F}_\delta$ (it isn't act on the coefficient of the polynomial) and by defining $i(h)$ as a differential operator of order zero and parity $\tilde{h}$.
We also recall (see \cite{MatRad11}) that if $h\in(\R^{2l+1|n})^*$ therefore $X^h$ is the quadratic projective vector field given by 
\[
X^h=\sum_{j=1}^{2l+1+n}{h_jt^j(-1)^{\tilde{j}}\mathcal{E}}
\]
 where $\mathcal{E}$ is the Euler vector field. This tools allows us to compute the operator $\gamma$ in simple terms.\\
 In particular, when $l=0$, we obtain the results of $S^{1|n}$. The following result expresses the operator $\gamma$.
\begin{lem}\label{Gamma2}
If $h$ is a quadratic contact projective vector field and if $h'\in (\R^{1|n})^*$
is such that $X^{h'}=h$ therefore one has on $\mathcal{S}^k_\delta(S^{1|n})$
\begin{equation}
\gamma(h)=-\left(2\lambda+k-1\right)i(h'),
\end{equation} where $i(h')$ is the interior product from Definition\ref{Int}.
\end{lem}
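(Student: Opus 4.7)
The plan is to reduce the statement to the analogous computation performed by Mathonet and Radoux in \cite{MatRad11} over $\R^{p|q}$, via the identification of $S^{1|n}$ with $\R^{1|n}$ in the affine chart. I would first exploit the Proposition preceding this Lemma, which guarantees that $\gamma$ vanishes on constant and linear vector fields. Since $\gamma$ is linear in its argument, it is completely determined by its values on the quadratic generators of $\spo(2|n)$, namely $X_{x^2}$ and the $X_{x\theta_i}$. In the correspondence recalled just above the statement, these are precisely the vector fields of the form $X^{h'}$ with $h'\in(\R^{1|n})^*$, so it suffices to establish the formula on these generators.

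Next, I would unfold $\gamma(X_f)=\mathcal{L}_{X_f}-L^\delta_{X_f}$ with $\mathcal{L}_{X_f}=Q^{-1}_{\mathrm{Aff}}\circ\mathcal{L}^{\lambda\mu}_{X_f}\circ Q_{\mathrm{Aff}}$. Because the affine quantization map used here is built from the standard partials $\partial_x,\partial_{\theta_i}$ on the affine chart, and because the quadratic generators $X^{h'}$ are realised matricially as in \cite{MatRad11}, the direct computation is structurally identical to the one carried out there. Their result yields, specialised to $p=1$, $q=n$, the expression
\[
\gamma(X^{h'})=-\bigl((p-q+1)\lambda_{\mathrm{Ber}}+k-1\bigr)\,i(h')
\]
on symbols of degree $k$, where $\lambda_{\mathrm{Ber}}$ denotes the Berezin tensor-density weight.

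Finally, I would convert this expression to contact weights. The superdimension is $m=1-n$, so $p-q+1=2-n=m+1$, and this is nonzero since $n\geqslant 3$. The isomorphism $\varphi:\mathcal{F}_\lambda(S^{1|n})\to\mathrm{Ber}_{2\lambda/(m+1)}(S^{1|n})$ recalled in the Remark following the definition of $\mathrm{Ber}_\lambda$ identifies the contact weight $\lambda$ used in this paper with the Berezin weight $\lambda_{\mathrm{Ber}}=2\lambda/(m+1)$. Substituting gives $(p-q+1)\lambda_{\mathrm{Ber}}=(m+1)\cdot\frac{2\lambda}{m+1}=2\lambda$, and hence $\gamma(X^{h'})=-(2\lambda+k-1)\,i(h')$, which is the announced formula.

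The main obstacle is the careful bookkeeping of the weight conversion: one must verify that $\varphi$ does intertwine the $\spo(2|n)$-actions on the two density modules, that $Q_{\mathrm{Aff}}$ in the present paper coincides, under this identification, with the affine quantization of \cite{MatRad11}, and that the interior product $i(h')$ — which only uses the pairing between $(\R^{1|n})^*$ and $\R^{1|n}$ as supervector spaces — is unaffected by the change of density normalization. All of these are routine but must be checked to justify the transport of the Mathonet--Radoux formula to the contact-weight framework.
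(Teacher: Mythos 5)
Your proposal is correct and follows essentially the same route as the paper, which simply defers to the computation of \cite{MatRad11}; your reduction to the quadratic generators $X_{x^2}$, $X_{x\theta_i}$ and the weight conversion $(p-q+1)\lambda_{\mathrm{Ber}}=2\lambda$ via the isomorphism \eqref{IntertwAction} (valid since $m+1=2-n\neq 0$ for $n\geqslant 3$) is exactly the bookkeeping the paper leaves implicit, and it is consistent with the contact-weight coefficients appearing later in the text.
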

\begin{proof}
The proof is similar as one done in details in \cite{MatRad11}.
\end{proof}
Using these tools we have the following result.
\begin{prop}\label{coucou}
Let $\varepsilon^k$ the dual vector of  the $k^{th}$ column vector of canonical basis of $\R^{1|n}$. We denote by $\zeta$(resp. $\eta_k$) the moment coordinate associated to the variable $x$ (resp. $\theta_k$). When we write a symbol in terms of the contact moment coordinates $\zeta$ and $\gamma_i$, then
\begin{enumerate}
\item  the operator $i(\varepsilon^1)$ is seen as the operator $\partial_\zeta-\theta_k\partial_{\gamma_k}$,
\item and the operator $i(\varepsilon^k)$, where $2\leqslant k\leqslant n+1$, is seen as the operator  $-\frac{1}{2}\partial_{\gamma_k}$.
\end{enumerate}
\end{prop}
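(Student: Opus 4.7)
The approach is to express the interior products $i(\varepsilon^\ell)$, which are initially defined on the symmetric algebra of $\R^{1|n}$ in terms of the Euclidean moment coordinates $(\zeta,\eta_1,\ldots,\eta_n)$, in the contact moment coordinates $(\zeta,\gamma_1,\ldots,\gamma_n)$ via the graded chain rule.

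First I would record the tautological expression in the Euclidean moments. By Definition \ref{Int}, $i(\varepsilon^\ell)$ is the unique graded derivation of $\mathrm{S}(\R^{1|n})$ vanishing on $\mathcal{F}_\delta$ and sending the dual basis vector $(e_m)^{*}$ to $\delta^\ell_m$; with $e_1 = \partial_x$ and $e_k = \partial_{\theta_{k-1}}$ for $k\geqslant 2$, one reads $i(\varepsilon^1) = \partial_\zeta$ and $i(\varepsilon^k) = \partial_{\eta_{k-1}}$ whenever a symbol is expressed in $(\zeta,\eta)$ moments.

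Next I would transfer to the contact moments. The defining property of $\gamma_i$, encoded in the isomorphism $\varphi$ of \eqref{ISOMOR_POLYN0}, is that it represents the principal contact symbol of the derivation $\overline{D}_i = \partial_{\theta_i} - \theta_i \partial_x$. This fixes the substitution between the fibre generators $\eta_i$ and $\gamma_i$, which is an affine change of variables involving a term in $\theta_i\zeta$, with the precise multiplicative constants dictated by the normalisations of the symbol map and the Lagrange bracket \eqref{Lagrange}. Holding the base coordinates $x$ and $\theta_i$ fixed and applying the graded chain rule,
\[
\partial_\zeta\big|_\eta \;=\; \partial_\zeta\big|_\gamma + \sum_i \left.\frac{\partial\gamma_i}{\partial\zeta}\right|_\eta \partial_{\gamma_i}, \qquad \partial_{\eta_i}\big|_\zeta \;=\; \sum_j \left.\frac{\partial\gamma_j}{\partial\eta_i}\right|_\zeta \partial_{\gamma_j},
\]
and reading off the Jacobian entries from the substitution produces the two stated expressions for $i(\varepsilon^1)$ and $i(\varepsilon^k)$.

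The main subtlety is bookkeeping the Koszul signs: the variables $\theta_i$, $\eta_i$, $\gamma_i$ and the derivations $\partial_{\gamma_i}$ are all odd, so rearrangements produce $(-1)$-factors that must be tracked carefully through the chain rule. I would settle them by evaluating the candidate operators on the generators $\zeta$ and $\eta_j$ of the symbol algebra directly, thereby fixing the $-\tfrac{1}{2}$ coefficient in part (2) and the $-\theta_k$ sign in part (1). Once the identification is confirmed on these generators, the fact that both sides of each equality are graded derivations of the symbol algebra automatically extends the agreement to all of $\mathcal{S}_\delta$.
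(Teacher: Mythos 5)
Your proposal is correct and follows essentially the same route as the paper: both identify the substitution $\gamma_i=\eta_i-\theta_i\zeta$ (the contact moment of $\overline{D}_i=\partial_{\theta_i}-\theta_i\partial_x$) and convert the canonical interior products $\partial_\zeta$, $\partial_{\eta_i}$ into the contact-moment operators by the (graded) chain rule, which the paper carries out by evaluating directly on the monomials $\zeta^c\gamma^I=\zeta^c(\eta_i-\theta_i\zeta)^I$. Your deferral of the exact $-\tfrac{1}{2}$ normalisation in part (2) to a check on generators matches the level of detail of the paper's own proof, which simply states that ``the second proof is similar.''
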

\begin{proof}
The symbol $\zeta^c\gamma^I$ is written with the terms of the canonical contact moment coordinates as $\zeta^c(\eta_i-\theta_i\zeta)^I$. It sufficient to see that to apply the operator $\partial_\zeta$ on the symbol $\zeta^c(\eta_i-\theta_i\zeta)^I$, this means that it is the same to apply the operator $\partial_\zeta-\theta_i\partial_{\gamma_i}$ on the symbol $\zeta^c\gamma^I$ and by writing the obtained result in the canonical moment coordinates. The second proof is similar.
\end{proof}
We define the following affine invariants.
\begin{df}
\begin{enumerate}
\item[(i)] We call contact divergence and we denote by $\mathrm{Div}_C$, the operator 
\[
\mathrm{Div}_C:\Sigma_\delta^{k,d}\to\Sigma_\delta^{k-1,d-1}:S\mapsto \partial_x\partial_\zeta S;
\]
\item[(ii)] We call the tangential divergence and we denote by $\mathrm{Div}_T$, the operator 
\[ 
\mathrm{Div}_T:\Sigma_\delta^{k,d}\to\Sigma_\delta^{k-1,d-\frac{1}{2}}:S\mapsto \overline{D}_r\partial_{\gamma_r}S.
\]
\end{enumerate}
\end{df}
The operators $\mathrm{Div}_C$ and $\mathrm{Div}_T$ intertwines the action of the affine Lie superalgebra $\mathrm{Aff}(2|n)$.\\

We have now the ingredients to compute the Casimir operator on the differential operators $\mathcal{D}_{\lambda\mu}^k(S^{1|n})$.
\begin{prop}
The Casimir operator $\mathcal{C}^\mathcal{D}$ on the differential operators $\mathcal{D}_{\lambda\mu}^k(S^{1|n})$ is written as
\[
\mathcal{C}^\D=C^S+N_{\mathcal{S}\mathcal{D}},
\] where the operator $N_{\mathcal{S}\mathcal{D}}$ equals to
\[ \frac{(2\lambda+k-1)}{2}\left(2\mathrm{Div}_C+\mathrm{Div}_T\right).\]
\end{prop}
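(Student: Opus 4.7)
The overall strategy, modelled on Mathonet--Radoux \cite{MatRad11}, is to transfer $\mathcal{C}^\D$ from $\mathcal{D}_{\lambda\mu}(S^{1|n})$ to $\mathcal{S}_\delta(S^{1|n})$ through the affine quantization $Q_{\mathrm{Aff}}$. On $\mathcal{S}_\delta$, the transported $\spo(2|n)$-action is $\mathcal{L}_{X_f}=L^S_{X_f}+\gamma(X_f)$ by the very definition \eqref{Affinemap} of $\gamma$ (with $L^\delta_{X_f}=L^S_{X_f}$). Substituting into $\mathcal{C}^\D=\sum_i\mathcal{L}_{u_i^*}\mathcal{L}_{u_i}$, with $(u_i),(u_i^*)$ the $K$-dual bases of Lemma \ref{dual}, expands as
$$\mathcal{C}^\D = C^S + \sum_i\bigl(L^S_{u_i^*}\gamma(u_i)+\gamma(u_i^*)L^S_{u_i}\bigr) + \sum_i\gamma(u_i^*)\gamma(u_i).$$

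The first key simplification is that $\gamma$ vanishes on constant and linear vector fields (Proposition just before Theorem \ref{Existence}), so only the quadratic generators $X_{x^2}$ and $X_{x\theta_i}$ yield a nonzero $\gamma$. Lemma \ref{dual} pairs each of these with a non-quadratic element ($X_{x^2}$ with $-\tfrac12 X_1$, and $X_{x\theta_i}$ with $X_{\theta_i}$), hence the pure $\gamma\gamma$-sum vanishes identically. The cross-term sum therefore reduces to
$$-\tfrac12\bigl(L^S_{X_1}\gamma(X_{x^2})+\gamma(X_{x^2})L^S_{X_1}\bigr) +\sum_i \bigl(L^S_{X_{\theta_i}}\gamma(X_{x\theta_i})-\gamma(X_{x\theta_i})L^S_{X_{\theta_i}}\bigr),$$
the signs being read off from the duality table.

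Next I would use Lemma \ref{Gamma2} to replace each $\gamma$ by a scalar multiple of an interior product: on $\mathcal{S}^k_\delta$,
$$\gamma(X_{x^2})=-(2\lambda+k-1)\,i(\varepsilon^1), \qquad \gamma(X_{x\theta_i})=-\tfrac12(2\lambda+k-1)\,i(\varepsilon^{i+1}),$$
where the factor $\tfrac12$ accounts for the normalisation $X^{\varepsilon^{i+1}}=2X_{x\theta_i}$. Proposition \ref{coucou} then converts these to explicit operators in the contact moment coordinates: $i(\varepsilon^1)=\partial_\zeta-\theta_k\partial_{\gamma_k}$ and $i(\varepsilon^{i+1})=-\tfrac12\partial_{\gamma_i}$. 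Theorem \ref{ActionFormulas} supplies the simple expressions $L^S_{X_1}=\partial_x$ and $L^S_{X_{\theta_i}}=-\tfrac12\overline{D}_i+\tfrac12\gamma_i\partial_\zeta$, so all four products can be written out and then reordered using the elementary commutators of Lemmas \ref{comm1}--\ref{comm2}.

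The final step is bookkeeping: after the reordering, the coefficient of $\partial_x\partial_\zeta$ assembles into $(2\lambda+k-1)$ (yielding the $2\cdot\tfrac{2\lambda+k-1}{2}\mathrm{Div}_C$ piece), and the coefficient of $\overline{D}_r\partial_{\gamma_r}$ into $\tfrac12(2\lambda+k-1)$ (yielding the $\tfrac{2\lambda+k-1}{2}\mathrm{Div}_T$ piece), which together reconstitute $N_{\mathcal{S}\mathcal{D}}$. The main obstacle is verifying that the stray pieces proportional to $\theta_k\partial_{\gamma_k}$ (arising from $i(\varepsilon^1)$) and $\gamma_i\partial_\zeta$ (arising from $L^S_{X_{\theta_i}}$) cancel after summation over $i$; conceptually this cancellation is forced by the fact that both $\mathrm{Div}_C$ and $\mathrm{Div}_T$ are $\mathrm{Aff}(2|n)$-invariant and the left-hand side $\mathcal{C}^\D-C^S$ is automatically $\mathrm{Aff}(2|n)$-invariant as well, so only invariant combinations can survive.
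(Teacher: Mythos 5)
Your route is essentially the paper's own: expand the Casimir using $\mathcal{L}_{X_f}=L^S_{X_f}+\gamma(X_f)$ and the dual basis of Lemma \ref{dual}, discard the $\gamma\gamma$ terms because $\gamma$ vanishes on $\mathrm{Aff}(2|n)$ and each quadratic generator is paired with a non-quadratic one, and evaluate the four surviving cross terms via Lemma \ref{Gamma2} and Proposition \ref{coucou} --- this is precisely what the paper does, and its final bookkeeping is no more detailed than yours. One intermediate formula you quote is off: Theorem \ref{ActionFormulas} with $f=\theta_i$ gives $L^S_{X_{\theta_i}}=\theta_i\partial_x+\tfrac12\overline{D}_i$ (the $(\delta-\zeta\partial_\zeta)$ and $\gamma_i\partial_\zeta$ contributions vanish since $\partial_x\theta_i=0$), not $-\tfrac12\overline{D}_i+\tfrac12\gamma_i\partial_\zeta$, so that line would need correcting before the coefficient count, though it does not affect the method.
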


\begin{proof}
By using the definition of $\gamma(X_f)$ (see \eqref{Affinemap} ) and the fact that it is equal to zero on the elements of $\mathrm{Aff}(2|n)$ and the definition of the Casimir operator (see Definition \eqref{Casimir}), we obtain the formula
\[
\mathcal{C}^\D=C^S-\frac{1}{2}\gamma(X_{x^2})L^S_{X_1}-\sum^n_{i=1}{\gamma(X_{x\theta_i})L^S_{X_{\theta_i}}}+\sum^n_{i=1}{L^S_{X_{\theta_i}}\gamma(X_{x\theta_i})}-\frac{1}{2}L^S_{X_1}\gamma(X_{x^2})\\
\]
By defining the operator $N_{\mathcal{S}\mathcal{D}}$ as follow
\[
N_{\mathcal{S}\mathcal{D}}=-\frac{1}{2}\gamma(X_{x^2})L^S_{X_1}-\sum^n_{i=1}{\gamma(X_{x\theta_i})L^S_{X_{\theta_i}}}+\sum^n_{i=1}{L^S_{X_{\theta_i}}\gamma(X_{x\theta_i})}-\frac{1}{2}L^S_{X_1}\gamma(X_{x^2})
\]
and by using the Lemma \ref{Gamma2} and the Proposition \ref{coucou}, we obtain the contributions of the all terms in the operator $N_{\mathcal{S}\mathcal{D}}$. The sum of these contributions gives the result.
\end{proof}
\begin{rmk}
It is easy to see that the operator $N_{\mathcal{P}\mathcal{D}}$ which measure the difference between the Casimir operator on the fine symbols $\mathcal{P}_\delta(S^{1|n})$ and the Casimir operator on the differential operators $\mathcal{D}^k_{\lambda\mu}(S^{1|n})$ is written as follow
\[
N_{\mathcal{P}\mathcal{D}}=\frac{1}{2}\Delta+\frac{2\lambda+k-1}{2}(2\mathrm{Div}_C+\mathrm{Div}_T).
\]
\end{rmk}
\subsection{Uniqueness of fine quantization}
We define a set of critical values of $\delta$.
\begin{df}\label{Crit-delta}
A value of $\delta$ is called critical if it exists the values $k,d,k'$ and $d'$ which satisfy the conditions
\[
d'\leqslant k'\leqslant 2d',\quad  d'< d \quad\mbox{and}\quad
d\leqslant k\leqslant 2d.
\] such that 
\[
\alpha_{k,d}-\alpha_{k',d'}= 0.
\]
\end{df}
We compute the explicit critical values in the following proposition.
\begin{prop}
If $P(k,k',,d,d')$ denotes the function
\[
(k-k')(2(k+k')+m-1)-4(kd-k'd')+2(d-d')(2d+2d'+1), 
\]then  the set $C_{\delta,\mathrm{crit}}$ of critical values of $\delta$ is given by
\[
\lbrace\frac{P(k,k',d,d')}{4(d-d')},  k,k'\in\N, d,d'\in\frac{1}{2}\N,  
 d'\leqslant k'\leqslant 2d',  d'< d,
d\leqslant k\leqslant 2d\rbrace.
\]
\end{prop}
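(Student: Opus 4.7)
The strategy I will adopt is direct computation. By Definition \ref{Crit-delta}, the critical values of $\delta$ are, for each admissible tuple $(k,d,k',d')$, the solutions of the single scalar equation $\alpha_{k,d} - \alpha_{k',d'} = 0$, with $\alpha_{k,d}$ given by formula \eqref{FUNCTION_VALUE}. So I will simply set up this equation, observe that it is linear in $\delta$, extract its coefficient and constant term, and solve.

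First I will argue that the equation is linear in $\delta$. In $\alpha_{k,d}$, the only summand that is quadratic in $\delta$, and the only summand contributing to the coefficient of $\delta$ with an expression independent of $(k,d)$, is $\frac{1}{2}\delta(2\delta-m-1)$. This summand does not depend on $(k,d)$, so it cancels in the subtraction $\alpha_{k,d} - \alpha_{k',d'}$; what remains is a polynomial of degree at most one in $\delta$.

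Next I will compute the coefficient of $\delta$ in $\alpha_{k,d}$ coming from the two remaining $\delta$-dependent summands: the term $(k-d)(m-2\delta)$ contributes $-2(k-d)$, and the term $\frac{1}{2}(2d-k)(m+1-4\delta)$ contributes $-2(2d-k)$, summing to $-2d$ after clean cancellation of the $k$ contributions. Consequently, the coefficient of $\delta$ in $\alpha_{k,d}-\alpha_{k',d'}$ is $-2(d-d')$, which is nonzero precisely thanks to the hypothesis $d'<d$ of Definition \ref{Crit-delta}; this ensures a unique solution.

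Finally I will compute the $\delta$-independent part of $\alpha_{k,d}$, which expands to $k^2+2d^2-2kd+\frac{k(m-1)}{2}+d$ up to a constant that cancels in the difference. Taking the corresponding expression at $(k',d')$, subtracting, and dividing by $2(d-d')$, I obtain
\[
\delta \;=\; \frac{(k^2-k'^2) + 2(d^2-d'^2) - 2(kd-k'd') + \tfrac{m-1}{2}(k-k') + (d-d')}{2(d-d')}.
\]
Multiplying numerator and denominator by $2$ and factoring $k^2-k'^2$ and $d^2-d'^2$, the numerator becomes exactly $P(k,k',d,d')$ and the denominator becomes $4(d-d')$, yielding the claimed formula. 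The main obstacle, such as it is, is purely bookkeeping: the key structural point is the cancellation in the second step that reduces the $\delta$-coefficient to the symmetric expression $-2d$, rather than a mixed combination of $k$ and $d$; this is exactly what forces the final denominator to depend only on $d-d'$. The range constraints on $k,k',d,d'$ are inherited verbatim from Definition \ref{Crit-delta} and enter the algebraic computation only through the nonvanishing of $d-d'$.
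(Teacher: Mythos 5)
Your proposal is correct and takes essentially the same approach as the paper, whose proof consists of the single remark that one should combine Definition \ref{Crit-delta} with formula \eqref{FUNCTION_VALUE}; you simply carry out that computation explicitly, and your algebra (the $\delta$-coefficient reducing to $-2(d-d')$, the constant part $k^2+2d^2-2kd+\tfrac{k(m-1)}{2}+d$, and the regrouping of the numerator into $P(k,k',d,d')$) checks out.
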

\begin{proof}
It sufficient to use the Definition \ref{Crit-delta} and the formula \eqref{FUNCTION_VALUE} to conclude. 
\end{proof}
The main result of the uniqueness is given by the following theorem.
\begin{theorem}
If  $\delta\notin C_{\delta,\mathrm{crit}} $ and if $\delta\notin I_\delta\cup\mathsf{C}$, then the fine $\spo(2|n)$-equivariant quantization on $\mathcal{P}_\delta(S^{1|n})$ given in the Theorem \ref{Existence} is unique.
\end{theorem}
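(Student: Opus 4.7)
The plan is to establish uniqueness by showing that if $Q_{1}$ and $Q_{2}$ are any two fine $\spo(2|n)$-equivariant quantizations, their difference $R:=Q_{1}-Q_{2}$ vanishes identically. From the very definition of a fine quantization, both $Q_{i}\vert_{\mathcal{P}_{\delta}^{d}}$ split the Heisenberg symbol map $\mathrm{h}\sigma^{d}$ and take values in $\mathcal{H}_{\lambda\mu}^{d}(S^{1|n})$; hence $R$ is $\spo(2|n)$-equivariant and sends $\mathcal{P}_{\delta}^{d}(S^{1|n})$ into $\mathcal{H}_{\lambda\mu}^{d-\frac{1}{2}}(S^{1|n})$ for every $d\in\tfrac{1}{2}\N$. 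Combined with Theorem \ref{Existence}, it therefore suffices to show $R\vert_{\mathcal{P}_{\delta}^{d}}=0$ for each such $d$.

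I would proceed by downward induction on $d'\in\tfrac{1}{2}\N$ with $d'\leqslant d-\tfrac{1}{2}$, proving at each stage that $R(\mathcal{P}_{\delta}^{d}(S^{1|n}))\subset \mathcal{H}_{\lambda\mu}^{d'-\frac{1}{2}}(S^{1|n})$; the base case is the observation just made, and iterating down to the smallest admissible $d'$ forces $R\vert_{\mathcal{P}_{\delta}^{d}}=0$. For the inductive step, assuming $R(\mathcal{P}_{\delta}^{d}(S^{1|n}))\subset \mathcal{H}_{\lambda\mu}^{d'}(S^{1|n})$, composition with the Heisenberg symbol map yields a $\spo(2|n)$-equivariant map
\[
T_{d,d'}:=\mathrm{h}\sigma^{d'}\circ R\vert_{\mathcal{P}_{\delta}^{d}}:\mathcal{P}_{\delta}^{d}(S^{1|n})\longrightarrow\mathcal{P}_{\delta}^{d'}(S^{1|n}).
\]
A direct inspection of formula (i) in Theorem \ref{ActionFormulas} shows that every term appearing in $L_{X_{f}}^{\Sigma}$ with $X_{f}\in\spo(2|n)$ preserves both the total polynomial degree $k$ in $(\zeta,\gamma)$ and the Heisenberg degree $d=\deg_{\zeta}+\tfrac{1}{2}\deg_{\gamma}$: the factors $\zeta\partial_{\zeta}$ and $\gamma_{k}\partial_{\gamma_{j}}$ are the Euler-type operators that do the work, while $f\partial_{x}$, $\overline{D}_{i}(f)\overline{D}_{i}$ and the various coefficient factors act only on coefficients. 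Consequently each $\Sigma_{\delta}^{k,d}(S^{1|n})$ is an $\spo(2|n)$-submodule of $\mathcal{P}_{\delta}(S^{1|n})$, and $T_{d,d'}$ splits into $\spo(2|n)$-equivariant components
\[
T_{d,d'}^{k,k'}:\Sigma_{\delta}^{k,d}(S^{1|n})\longrightarrow\Sigma_{\delta}^{k',d'}(S^{1|n}),\qquad \lceil d\rceil\leqslant k\leqslant 2d,\quad \lceil d'\rceil\leqslant k'\leqslant 2d'.
\]

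Each $T_{d,d'}^{k,k'}$ intertwines the Casimir operator $C^{\mathcal{P}}$, which by \eqref{Finsymbolcas} acts as the scalar $\alpha_{k,d}$ on the source and $\alpha_{k',d'}$ on the target. Intertwining then yields $(\alpha_{k,d}-\alpha_{k',d'})T_{d,d'}^{k,k'}=0$, and the hypothesis $\delta\notin C_{\delta,\mathrm{crit}}$ is, by Definition \ref{Crit-delta}, exactly the statement that $\alpha_{k,d}\neq\alpha_{k',d'}$ for every admissible quadruple with $d'<d$. Hence every component vanishes, $T_{d,d'}=0$, and the inductive step closes.

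The step I expect to require the most care is the verification that the $\spo(2|n)$-action preserves the fine bigrading into the pieces $\Sigma_{\delta}^{k,d}(S^{1|n})$, since that is precisely what allows the equivariant map $T_{d,d'}$ to split along Casimir eigenspaces and converts the scalar inequality $\alpha_{k,d}\neq\alpha_{k',d'}$ into the vanishing of the corresponding block of $R$. The complementary condition $\delta\notin I_{\delta}\cup\mathsf{C}$ plays no role in the uniqueness argument itself; it is invoked only to appeal to Theorem \ref{Existence} and guarantee the existence of the quantization whose uniqueness is being asserted.
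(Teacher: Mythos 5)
Your argument is correct, and it reaches the conclusion by a genuinely different mechanism than the paper. The paper works with the Casimir operator $\mathcal{C}^{\mathcal{D}}$ on the space of differential operators: it shows that any fine equivariant $Q$ must send $S\in\Sigma_{\delta}^{k,d}$ to the unique element of $\mathcal{H}^{d}_{\lambda\mu}$ with Heisenberg symbol $S$ that is a $\mathcal{C}^{\mathcal{D}}$-eigenvector of eigenvalue $\alpha_{k,d}$, the uniqueness of that eigenvector coming from the triangular system \eqref{UNIC}, which is solvable term by term precisely because $\alpha_{k,d}-\alpha_{k',d'}\neq 0$ off the critical set. That route requires the computation of $\mathcal{C}^{\mathcal{D}}=C^{S}+N_{\mathcal{S}\mathcal{D}}$ (hence the map $\gamma$, Lemma \ref{Gamma2} and the divergences $\mathrm{Div}_C$, $\mathrm{Div}_T$), but in exchange it re-derives existence and exhibits $Q(S)$ explicitly as a Casimir eigenvector. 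You instead take the difference $R=Q_{1}-Q_{2}$ of two quantizations and kill it by downward induction on the Heisenberg order, reducing at each stage to a Schur-type vanishing: an $\spo(2|n)$-equivariant map $\Sigma_{\delta}^{k,d}\to\Sigma_{\delta}^{k',d'}$ with $d'<d$ intertwines $C^{\mathcal{P}}$, which by \eqref{Finsymbolcas} acts by the distinct scalars $\alpha_{k,d}$ and $\alpha_{k',d'}$, so the map is zero. This needs only the Casimir on the symbol modules and the observation (which you rightly flag as the point requiring care, and which does follow from Theorem \ref{ActionFormulas}(i)--(ii)) that the $\spo(2|n)$-action preserves each block $\Sigma_{\delta}^{k,d}$; it bypasses $\mathcal{C}^{\mathcal{D}}$ and $N_{\mathcal{P}\mathcal{D}}$ entirely, at the cost of saying nothing about existence. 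The range of quadruples $(k,d,k',d')$ your induction produces is exactly the one in Definition \ref{Crit-delta}, so the same critical set governs both proofs, and your closing remark that $\delta\notin I_\delta\cup\mathsf{C}$ enters only through Theorem \ref{Existence} is accurate.
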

\begin{proof}
If $S\in\Sigma^{k,d}_\delta(S^{1|n})$, then it exists an unique $\hat{S}=S+\sum_{\substack{k',d'\\d'\leqslant k'\leqslant 2d'\\d'<d}}S_{k',d'}$ which the Heisenberg symbol is given by $S$ and which is an eigenvector with eigenvalue $\alpha_{k,d}$ of $\mathcal{C}^\mathcal{D}$. These conditions are written as 
\begin{multline}\label{EQUATION4}
(C^\Sigma+N_{\mathcal{P}\D})\left(S+\sum_{\substack{k',d'\\d'\leqslant k'\leqslant 2d'\\d'<d}}S_{k',d'}\right)
=\alpha_{k,d}\left(S+\sum_{\substack{k',d'\\d'\leqslant k'\leqslant 2d'\\d'<d}}S_{k',d'}\right).\\
\end{multline}
Since the operator $N_{\mathcal{P}\mathcal{D}}$ maps the space $\Sigma_\delta^{k,d}(S^{1|n})$ to the direct sum 
\[
\Sigma_\delta^{k,d-\frac{1}{2}}(S^{1|n})\oplus\Sigma_\delta^{k-1,d-\frac{1}{2}}(S^{1|n})\oplus\Sigma_\delta^{k-1,d-1}(S^{1|n}),
\] 

we obtain thus, in general, the equations in function of symbols
 
\begin{equation}\label{UNIC}
(\alpha_{k,d}-\alpha_{k',d'})S_{k',d'}= f(S_{k'',d''}),\quad\mbox{avec}\quad k''+d''>k'+d'
\end{equation}
and $f$ a function which depends on the operator $N_{\mathcal{P}\mathcal{D}}$.\\

Since $\delta$ is not critical, the expressions $(\alpha_{k,d}-\alpha_{k',d'})$
in \eqref{UNIC} are not equal to zero and every equation of the equations \eqref{UNIC} possesses an unique solution i.e, the terms $S_{k',d'}$ are determined with an unique way. We determine the terms $S_{k',d'}$ one by one.

Briefly, if $S\in \Sigma^{k,d}_\delta$, it exists an unique $\hat{S}$ such that $h\sigma^d(\hat{S})=S$ and such that $\mathcal{C}^\mathcal{D}(\hat{S})=\alpha_{k,d}(\hat{S})$. We define thus $Q(S)$ as $\hat{S}$.\\
If now $S\in\mathcal{P}^d_\delta(S^{1|n})$ and if  $S=\sum_{k=\lceil d\rceil}^{2d}S_k$ with $S_k\in\Sigma^{k,d}_\delta$, then one sets   $Q(S):=\sum_{k=\lceil d\rceil}^{2d}Q(S_k)$.
Since we have 
 \[
h\sigma^d(\mathcal{L}_{X_f}^{\lambda\mu}Q(S_k))= L_{X_f}^{\mathcal{P}}h\sigma^dQ(S_k)=L_{X_f}^{\mathcal{P}}(S_k),\quad\forall k \quad\mbox{and}\quad
h\sigma^d(Q(L_{X_f}^{\mathcal{P}}(S_k)))=L_{X_f}^{\mathcal{P}}(S_k),\forall k
\]  
then we obtain
\[
\mathcal{L}_{X_f}^{\lambda\mu}Q(S_k)=QL_{X_f}^{\mathcal{P}}(S_k),\quad\forall k
\] because the two members are the eigenvectors of $C^{\mathcal{D}}$ of the same eigenvalue. This means that one has:
\begin{multline*}
\quad \quad \mathcal{C}^{\D}(\mathcal{L}_{X_f}^{\lambda\mu}Q(S_k))=\mathcal{L}_{X_f}^{\lambda\mu}\mathcal{C}^{\D}Q(S_k) =\alpha_{k,d}\mathcal{L}_{X_f}^{\lambda\mu}Q(S_k)),\quad\forall k\\ \mbox{whereas}\quad
\mathcal{C}^{\D}Q(L_{X_f}^{\mathcal{P}}(S_k))=\alpha_{k,d}Q(L_{X_f}^{\mathcal{P}}(S_k)),\quad\forall k.\quad\quad\quad\quad
\end{multline*}
We have that   $L_{X_f}^{\mathcal{P}}(S_k)$ is the eigenvector of $C^\Sigma$ with eigenvalue $\alpha_{k,d}$ because
 \[ C^\Sigma(L_{X_f}^{\mathcal{P}}(S_k))=L_{X_f}^{\mathcal{P}}(\alpha_{k,d}S_k)=\alpha_{k,d}L_{X_f}^{\mathcal{P}}(S_k).
 \]
 
 By linearity, one has  $\mathcal{L}_{X_f}^{\lambda\mu}Q(S)=QL_{X_f}^\mathcal{P}(S)$.\\
If $Q$ is a fine quantization on $\mathcal{P}_\delta^d(S^{1|n})$, then its restrictions on the sub-spaces $\Sigma^{k,d}_\delta(S^{1|n})$ where
 $\lceil d\rceil\leqslant k\leqslant 2d$, are unique because if $S\in\Sigma^{k,d}_\delta(S^{1|n})$ then $Q(S)$ must be such that   $h\sigma^d(Q(S))=S$
 and  $\mathcal{L}_{X_f}^{\lambda\mu}Q(S)=Q(L_{X_f}^\mathcal{P}S)$, therefore $\mathcal{C}^\mathcal{D}(Q(S))$ must equals to $\alpha_{k,d}Q(S)$.
 
 The image $Q(S)$ is unique by using the first part of the proof. Since the restriction of $Q$ on the sub-spaces $\Sigma^{k,d}_\delta(S^{1|n})$ is unique then $Q$ is unique on $\mathcal{P}^d_\delta(S^{1|n})$.\\
 If $\delta\notin C_{\delta,\mathrm{crit}}$, then the existence and the uniqueness of the quantization are assured on the space  $\mathcal{P}^d_\delta(S^{1|n})$, for all $d\in\frac{1}{2}\N$.
Therefore, the equivariant quantization exists and it is unique on the space
 $\mathcal{P}_\delta(S^{1|n})$.
\end{proof}
\section{Acknowledgments}
It is a pleasure to thank F. Radoux, P. Mathonet, and J.P. Michel for helpful suggestions and discussions and V.Ovsienko for his interest to this work.

\end{document}